\newtheorem{theorem}{Theorem}
\newtheorem{corollary}{Corollary}
\newtheorem{lemma}{Lemma}
\newtheorem{remark}{Remark}
\begin{document}

\begin{center}
\Large \bf Asymptotic analysis of a boundary-value problem\\ with
the nonlinear boundary multiphase interactions \\ in a perforated
domain
\end{center}

\begin{center}
\large \bf  Taras A.~Mel'nyk and Olena A.~Sivak
\end{center}

\begin{center}
\small Department of Mathematical Physics, National Taras Shevchenko University of Kyiv,\\
 01033 Kyiv, Ukraine\\
E-mail: melnyk@imath.kiev.ua, o.a.sivak@gmail.com
\end{center}

\begin{abstract} We consider a boundary-value problem for the second order elliptic differential operator with rapidly
oscillating coefficients in a domain $\Omega_{\varepsilon}$ that is $\varepsilon-$periodically perforated  by small holes. The  holes are divided into two $\varepsilon-$periodical sets depending on the boundary interaction at  their surfaces.  Therefore, two different nonlinear Robin
boundary conditions $\sigma_\varepsilon (u_\varepsilon) + \varepsilon \kappa_{m} (u_\varepsilon) = \varepsilon
g^{(m)}_\varepsilon, \ m=1, 2,$ are given on the corresponding boundaries of the small holes. The asymptotic analysis of this
problem is made as $\varepsilon\to0,$ namely the convergence theorem both for the solution and for the energy integral is proved without using extension operators, the asymptotic approximations both for the solution and for the energy integral are constructed and the corresponding
error estimates  are obtained.
\end{abstract}

\section{Introduction and statement of the problem}

In recent years,  a rich collection of new results on asymptotic analysis of boundary-value problems in perforated domains is appeared
(see for example \cite{Cioranescu-Paulin-2}-\cite{Zhikov-Rychaho}). The classical method proposed by E. Khruslov \cite{Khruslov} and D. Cioranescu and J. Saint Jean Paulin \cite{Cioranescu-Paulin-1} is based on a special bounded extension of solutions in Sobolev spaces.
It was established by V. Zhikov \cite{Zhikov-1,Zhikov-2} that the homogenization results can be obtained without using the extension technique in Sobolev spaces in periodically perforated domains.
It should be mentioned the paper \cite{Cioranescu-Donato-Zaki}, where the homogenization results for an elliptic problem with a nonlinear
boundary condition in a perforated domain  were obtained with the help of a new unfolding method that does not need any extension operators as well.

In this paper we use this simple Zhikov's approach and the scheme of the paper  \cite{Mel-m2as-08},
where the full asymptotic analysis (the convergence of  the solution and the energy integral, the approximation for the solution and
the corresponding asymptotic error estimate in the Sobolev space $H^1)$ was made for  an elliptic problem with a nonlinear
boundary condition in a thick junction.

Let $B$ be a finite union of smooth disjoint nontangent domains strictly lying in the unit square
$
\square:= \{ \xi \in \mathbb{R}^n: \ \ 0<\xi_i<1,\quad
i=\overline{1,n} \}.
$
In an arbitrary way, we divide $B$ into two sets,
$B^{(1)}=\bigcup\limits_{k=1}^{N_1}B^{(1)}_{k}$ and
$B^{(2)}=\bigcup\limits_{k=1}^{N_2}B^{(2)}_{k}$.
Let us introduce the following notations:
$$
Q_0 := \square \setminus \overline{B},\quad {\cal
B}^{(m)}:=\bigcup\nolimits_{ z \in \mathbb{Z}^n} \bigl( z +
B^{(m)}\bigr),  \quad {\cal B}^{(m)}_\varepsilon :=\varepsilon
{\cal B}^{(m)}=\{ x \in \mathbb{R}^n: \ \varepsilon^{-1} x \in
{\cal B}^{(m)} \}, 
$$$$
m = 1, 2,
$$
where $\varepsilon$ is a small parameter. Let $\Omega$ be a smooth
bounded domain in $\mathbb{R}^n.$ Define the following perforated
domain
$
\Omega_\varepsilon=\Omega \setminus \overline{\bigl({\cal
B}^{(1)}_\varepsilon \cup {\cal B}^{(2)}_\varepsilon\bigr)}
$
and require the domain $\Omega_\varepsilon$ to be a domain with
the Lipschitz boundary. Denote $\Gamma_\varepsilon=\partial \Omega
\cap \overline{\Omega _\varepsilon}$ and $\Xi^{(m)}_\varepsilon=
\Omega \cap \partial {\cal B}^{(m)}_\varepsilon, \ \ m=1, 2,$ \ \
$\Xi_\varepsilon=\Xi^{(1)}_\varepsilon\cup\Xi^{(2)}_\varepsilon.$
(see Fig.~1).

\begin{figure}[htbp]
\begin{center}
\includegraphics[width=8cm]{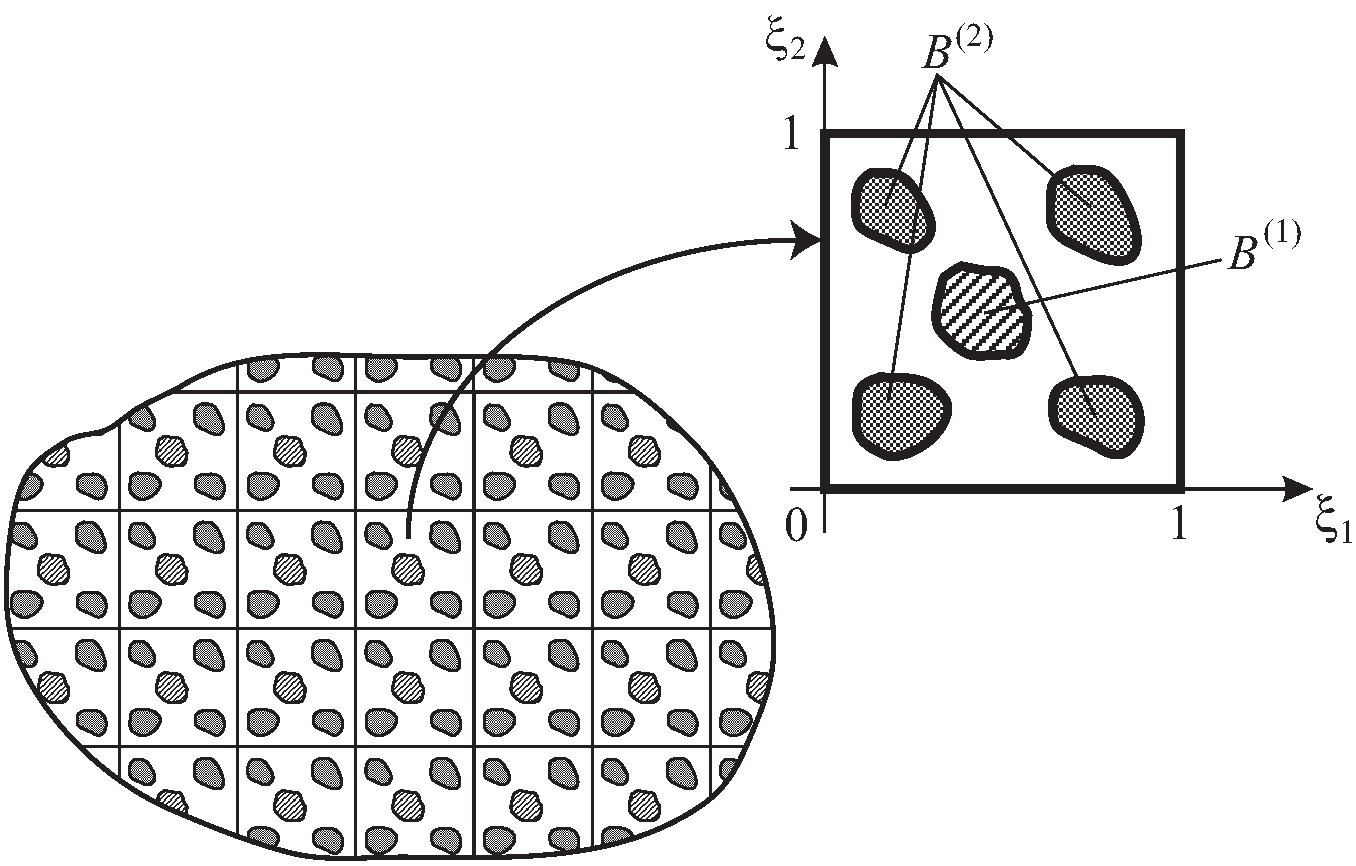} 
\caption{}
\end{center}
\end{figure}

Let  $a_{ij}(\xi)$, $\xi \in \mathbb{R}^n$, $i,j=\overline{1,n},$
be smooth $1-$periodic functions such that
\begin{gather}
1) \ \ \forall i,j=1, \dots , n, \quad \forall \xi \in
\mathbb{R}^n :
\quad  a_{ij}(\xi)=a_{ji}(\xi), \notag \\
2)\ \ \exists \,\varkappa_1>0 \ \ \exists\, \varkappa_2>0  \quad
\forall\,\xi \in \mathbb{R}^n \ \ \forall \, \eta \in \mathbb{R}^n
:\quad \varkappa_1|\eta|^2 \leq  a_{ij}(\xi) \eta_i \eta_j \leq
\varkappa_2|\eta|^2.    \label{elliptic-cond}
\end{gather}

\begin{remark} Here and in the sequel we
adopt the Einstein convention of summation over repeated indexes.
\end{remark}

Let $f_\varepsilon, f_0, g^{(m)}_\varepsilon, g^{(m)}_0$ be given
functions such that $ f_\varepsilon, f_0 \in L^2(\Omega),$
$g^{(m)}_\varepsilon, g^{(m)}_0 \in H^1_0(\Omega)$ and
\begin{equation}\label{cond-1}
f_\varepsilon \stackrel{s}{\longrightarrow} f_0 \ \ \ \textrm{in} \ \ \
L^2(\Omega), \qquad g^{(m)}_\varepsilon
\stackrel{w}{\longrightarrow} g^{(m)}_0\ \ \ \textrm{weakly in} \ \ \
H^1(\Omega), \ \ m=1, 2.
\end{equation}
The given functions $\kappa_m : \mathbb{R} \to \mathbb{R}, \ m=1,
2,$ are Lipschitz continuous (it is equivalent that $\kappa_m\in
W^{1,\infty}_{loc}(\mathbb R)$) and such that
\begin{equation} \label{cond-2}
\exists \,c_1>0  \ \ \exists\, c_2>0 : \ \   c_1\le \kappa'_m \le
c_2 \quad \text{a.e. in} \ \mathbb{R} \quad (m=1, 2).
\end{equation}

In the perforated domain $\Omega_\varepsilon$ we consider the
following nonlinear problem
\begin{equation}\label{bvpipd}
\left\{
   \begin{array}{rcll}
-{\cal L}_\varepsilon(u_\varepsilon)&=&f_\varepsilon & \textrm{in}
\ \ \Omega_\varepsilon,
\\[1mm]
\sigma_\varepsilon (u_\varepsilon)+\varepsilon \kappa_1
(u_\varepsilon)&=& \varepsilon g^{(1)}_\varepsilon & \textrm{on} \
\ \Xi^{(1)}_\varepsilon,
\\[1mm]
\sigma_\varepsilon (u_\varepsilon)+\varepsilon \kappa_2
(u_\varepsilon)&=& \varepsilon g^{(2)}_\varepsilon & \textrm{on} \
\ \Xi^{(2)}_\varepsilon,
\\[1mm]
u_\varepsilon &=&0&\textrm{on}\  \ \Gamma_\varepsilon,
   \end{array}
\right.
\end{equation}
where $ {\cal L}_\varepsilon(u_\varepsilon) \equiv \partial_{x_i}
\left( a_{ij}^{\varepsilon} (x) \partial_{x_j} u_\varepsilon (x)
\right),$ \ \  $\sigma_\varepsilon (u_\varepsilon) \equiv
a_{ij}^{\varepsilon} (x) \partial_{x_j} u_\varepsilon (x)\, \nu_i,
$ \ \ $ a_{ij}^{\varepsilon} (x) \equiv a_{ij} \left(
\frac{x}{\varepsilon} \right),$ \ $\partial_{x_i} u =
\frac{\partial u}{\partial x_i}, $ \ $\left(\nu_1\bigl(
\frac{x}{\varepsilon} \bigr), \dots \nu_n\bigl(
\frac{x}{\varepsilon} \bigr)\right)$ is the outward normal.

Recall that a function $u_\varepsilon$ from the Sobolev space
$H^1(\Omega_\varepsilon, \Gamma_\varepsilon)=\{ u \in
H^1(\Omega_\varepsilon): \  u |_{\Gamma_\varepsilon} =0 \}$ is a
weak solution to problem (\ref{bvpipd}) if the following integral
identity
\begin{equation}
\label{inttot} \int_{\Omega_\varepsilon} a_{ij}^\varepsilon \,
\partial_{x_j}u_\varepsilon \, \partial_{x_i} \varphi\,dx
+\varepsilon\sum_{m=1}^2\int_{\Xi^{(m)}_\varepsilon}
\kappa_m(u_\varepsilon) \varphi\,ds_x =\int_{\Omega_\varepsilon}
f_\varepsilon \varphi\, dx +\varepsilon
\sum_{m=1}^2\int_{\Xi^{(m)}_\varepsilon} g^{(m)}_\varepsilon
\varphi\, ds_x
\end{equation}
holds for any function $\varphi \in H^1(\Omega_\varepsilon,
\Gamma_\varepsilon).$

\medskip

Our goal is to study the asymptotic behavior of $u_\varepsilon$ as $\varepsilon\to0.$  Also it will be understandable further how conduct  research in the case of  $p$-multiphase interactions in perforated domains.

\section{Auxiliary uniform estimates}

Let $H^1_{{\rm per}}(Q_0)=\{v \in H^1(Q_0):\  v-
\textrm{1-periodic in} \ \ \xi_1,\dots ,\xi_n\}.$ Obviously, we
can periodically extend  every function $v$ from $H^1_{{\rm
per}}(Q_0)$ into $H^1_{loc}\bigl(\mathbb{R}^n \setminus
\overline{({\cal B}^{(1)}\cup {\cal B}^{(2)})}\,\bigr);$ this
extension will be denoted again by $v.$
Let $\psi^{(m)}_0 \in H^1_{{\rm per}}(Q_0), \ m=1, 2,$ be  weak
solutions to the corresponding problems
\begin{equation}
\label{kznakomper} \left\{
\begin{array}{l}
{\cal L}_{\xi \xi} \bigl(\psi^{(1)}_0\bigr)=q_1 \  \  \textrm{in}
\ \ Q_0,
\\[1mm]
\sigma_\xi\bigl(\psi^{(1)}_0\bigr)=1 \ \   \textrm{on} \  \
S^{(1)},
\\[1mm]
\sigma_\xi\bigl(\psi^{(1)}_0\bigr)=0 \ \
\textrm{on} \  \ S^{(2)},
\\[1mm]
\langle \psi^{(1)}_0 \rangle_{Q_0}=0,
\end{array}
\right. \quad \qquad \left\{
\begin{array}{l}
{\cal L}_{\xi \xi} \bigl(\psi^{(2)}_0\bigr)=q_2 \  \  \textrm{in}
\ \ Q_0,
\\[1mm]
\sigma_\xi\bigl(\psi^{(2)}_0\bigr)=0 \ \   \textrm{on} \  \
S^{(1)},
\\[1mm]
\sigma_\xi\bigl(\psi^{(2)}_0\bigr)=1 \ \ \textrm{on} \  \ S^{(2)},
\\[1mm]
\langle \psi^{(2)}_0 \rangle_{Q_0}=0,
\end{array}
\right.
\end{equation}
where ${\cal L}_{\xi \xi} (\psi) \equiv
\partial_{\xi_i} \left( a_{ij} (\xi)
\partial_{\xi_j} \psi (\xi)\right),$  \ $\sigma_\xi (\psi) \equiv a_{ij} \partial_{\xi_j}\psi (\xi)
\nu_i (\xi),$ \ $\left( \nu_1,\dots,\nu_n\right)$ is the outward
normal to $S,$ \ $S=S^{(1)}\cup S^{(2)},$ \ $S^{(m)}=\partial
B^{(m)},$ \ $q_m=\frac{|S^{(m)}|}{|Q_0|},$
\ $|S^{(m)}|= \text{meas}_{\,2}\, S^{(m)}$  \ $(m=1,2),$ \ $|Q_0|= \text{meas}_{\, 3}\, Q_0,$
\ $ \langle \psi \rangle_{Q_0}=\int_{Q_0} \psi (\xi)\, d\xi.$
\ The existence and uniqueness of the solutions to problems
(\ref{kznakomper}) follows from the lemma.

\begin{lemma}\label{cell-lemma}
 Let $F_i \in L^2(Q_0), i=\overline{0,n}, \ F^{(m)}_{n+1}\in L^2(S^{(m)}), \ m=1, 2.$
\ There exists a  solution $N \in H^1_{{\rm per}}(Q_0)$ to the
following problem
\begin{equation}
\label{kznakomper_zag} \left\{
\begin{array}{l}
- {\cal L}_{\xi \xi}(N) = F_0 + \partial_{\xi_i} F_i \ \
\textrm{in} \ \ \ Q_0,
\\[1mm]
\sigma_\xi(N)= -F_i \nu_i + F^{(1)}_{n+1} \ \ \ \textrm{on} \ \ \
S^{(1)},
\\[1mm]
\sigma_\xi(N)= -F_i \nu_i + F^{(2)}_{n+1} \ \ \ \textrm{on} \ \ \
S^{(2)},
\end{array}
\right.
\end{equation}
if and only if
\begin{equation}
\langle F_0 \rangle_{Q_0}+\langle F^{(1)}_{n+1} \rangle_{S^{(1)}}
+ \langle F^{(2)}_{n+1} \rangle_{S^{(2)}} =0.
\end{equation}
In addition this solution is defined up to an additive constant.
\end{lemma}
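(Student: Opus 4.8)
The plan is to pass to the weak (variational) formulation of problem (\ref{kznakomper_zag}) and then apply the Lax--Milgram lemma on the subspace of functions with vanishing integral over $Q_0$; in this formulation the compatibility condition and the uniqueness up to an additive constant both fall out as consequences of the symmetric, one-dimensional kernel of the associated bilinear form.

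First I would derive the integral identity characterizing weak solutions. Multiplying the equation by a test function $\varphi \in H^1_{{\rm per}}(Q_0)$, integrating over $Q_0$ and integrating by parts twice (once in the principal part and once in the term $\partial_{\xi_i}F_i$), the contributions on the outer faces $\partial\square$ cancel by periodicity of $N$, $a_{ij}$ and $F_i$, while the conormal terms on $S^{(1)}\cup S^{(2)}$ are, via the prescribed boundary conditions $\sigma_\xi(N)=-F_i\nu_i+F^{(m)}_{n+1}$, reduced to the data $F^{(m)}_{n+1}$ (the $F_i\nu_i$ surface contributions cancel exactly). This yields the identity: find $N\in H^1_{{\rm per}}(Q_0)$ such that
\[
a(N,\varphi):=\int_{Q_0} a_{ij}\,\partial_{\xi_j}N\,\partial_{\xi_i}\varphi\,d\xi
=\int_{Q_0}F_0\,\varphi\,d\xi-\int_{Q_0}F_i\,\partial_{\xi_i}\varphi\,d\xi
+\sum_{m=1}^2\int_{S^{(m)}}F^{(m)}_{n+1}\,\varphi\,ds_\xi=:L(\varphi)
\]
holds for all $\varphi\in H^1_{{\rm per}}(Q_0)$. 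The necessity of the compatibility condition is then immediate: since constants are admissible periodic test functions, taking $\varphi\equiv1$ gives $a(N,1)=0$, whence $L(1)=0$, which is exactly $\langle F_0\rangle_{Q_0}+\langle F^{(1)}_{n+1}\rangle_{S^{(1)}}+\langle F^{(2)}_{n+1}\rangle_{S^{(2)}}=0$.

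For sufficiency and uniqueness I would work on the closed subspace $\dot H=\{v\in H^1_{{\rm per}}(Q_0):\langle v\rangle_{Q_0}=0\}$ (the same normalization as in (\ref{kznakomper})). The form $a$ is symmetric and, by the upper ellipticity bound $\varkappa_2$ in (\ref{elliptic-cond}), bounded on $H^1_{{\rm per}}(Q_0)$; by the Poincar\'e--Wirtinger inequality on the connected Lipschitz cell $Q_0$ together with the lower bound $\varkappa_1$, it is coercive on $\dot H$, i.e. $a(v,v)\ge\varkappa_1\|\nabla v\|^2_{L^2(Q_0)}\ge C\|v\|^2_{H^1(Q_0)}$ for $v\in\dot H$. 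The functional $L$ is bounded: the volume terms are controlled directly by the $L^2$-norms of $F_0,F_i$, and the surface terms by the continuity of the trace operator $H^1(Q_0)\to L^2(S^{(m)})$. Assuming the compatibility condition, $L(1)=0$, so $L$ is well defined on $\dot H$, and Lax--Milgram yields a unique $N\in\dot H$ with $a(N,\varphi)=L(\varphi)$ for all $\varphi\in\dot H$. Writing an arbitrary test function as $\varphi=(\varphi-|Q_0|^{-1}\langle\varphi\rangle_{Q_0})+|Q_0|^{-1}\langle\varphi\rangle_{Q_0}$ and using $a(N,{\rm const})=0=L({\rm const})$ (the latter by compatibility) shows that $N$ solves the identity for every $\varphi\in H^1_{{\rm per}}(Q_0)$. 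Uniqueness up to a constant follows by testing the difference of two solutions with itself: coercivity forces its gradient, hence its difference from a constant, to vanish, and $Q_0$ is connected.

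The only genuinely nonroutine points I anticipate are (i) establishing the Poincar\'e--Wirtinger inequality, and thereby the coercivity of $a$, on the perforated periodic cell $Q_0$, where the connectedness and the Lipschitz regularity of $\partial Q_0$ are essential, and (ii) the boundedness of the surface part of $L$, which rests on the trace inequality for $H^1(Q_0)$ restricted to the smooth surfaces $S^{(m)}$. Once these structural facts are in place, the equivalence with the compatibility condition and the one-parameter indeterminacy of the solution are a direct application of Lax--Milgram in the mean-zero formulation.
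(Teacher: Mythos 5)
Your proof is correct and is precisely the ``standard'' argument the paper invokes without writing out (it simply cites Oleinik--Yosifian--Shamaev): weak formulation in $H^1_{\rm per}(Q_0)$, necessity of the compatibility condition from the constant test function, existence via Lax--Milgram on the mean-zero subspace using Poincar\'e--Wirtinger and the trace inequality, and uniqueness up to a constant from coercivity and connectedness of $Q_0$. Nothing to add; your handling of the cancellation of the $F_i\nu_i$ surface terms and of the passage from mean-zero to arbitrary periodic test functions is exactly how the standard proof goes.
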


The proof is standard (see for instance \cite{OYS}). Then the
$\varepsilon$-periodic functions $\psi^{(m)}_0 \left(
\frac{x}{\varepsilon} \right), \ x \in \Omega_\varepsilon, \ m=1,
2,$ satisfy the following relations
$$
\left\{
\begin{array}{l}
\frac{\partial}{\partial x_i} \bigl( a_{ij}^\varepsilon (x)
\frac{\partial}{\partial x_j} \bigl( \psi^{(1)}_0 \left(
\frac{x}{\varepsilon} \right) \bigr) \bigr) = \varepsilon^{-2} q_1
\ \ \textrm{in}\  \Omega_\varepsilon,
\\
\sigma_\varepsilon \bigl( \psi^{(1)}_0 \left(
\frac{x}{\varepsilon} \right) \bigr)= \varepsilon^{-1} \ \
\textrm{on}\ \Xi^{(1)}_\varepsilon,
\\
\sigma_\varepsilon \bigl( \psi^{(1)}_0 \left(
\frac{x}{\varepsilon} \right) \bigr)= 0 \ \ \textrm{on}\
\Xi^{(2)}_\varepsilon,
\end{array}
\right.
\qquad
\left\{
\begin{array}{l}
\frac{\partial}{\partial x_i} \bigl( a_{ij}^\varepsilon (x)
\frac{\partial}{\partial x_j} \bigl( \psi^{(2)}_0 \left(
\frac{x}{\varepsilon} \right) \bigr) \bigr) = \varepsilon^{-2}q_2
\ \ \textrm{in}\  \Omega_\varepsilon,
\\
\sigma_\varepsilon \bigl( \psi^{(2)}_0 \left(
\frac{x}{\varepsilon} \right) \bigr)= 0 \ \ \textrm{on}\
\Xi^{(1)}_\varepsilon,
\\
\sigma_\varepsilon \bigl( \psi^{(1)}_0 \left(
\frac{x}{\varepsilon} \right) \bigr)= \varepsilon^{-1} \ \
\textrm{on}\ \Xi^{(2)}_\varepsilon.
\end{array}
\right.
$$
Multiplying with arbitrary function $\varphi \in
H^1(\Omega_\varepsilon, \Gamma_\varepsilon)$ the corresponding
differential equation, integrating over $\Omega_\varepsilon$  and
taking into account the boundary conditions, we get the following
integral identities
\begin{equation}\label{identities}
\varepsilon \int_{\Xi^{(m)}_\varepsilon} \varphi \, ds_x =
\varepsilon \int_{\Omega_\varepsilon} a_{ij}^\varepsilon(x) \,
\partial_{\xi_j} \psi^{(m)}_0(\xi)|_{\xi=\frac{x}{\varepsilon}} \,
\partial_{x_i} \varphi \,dx + q_m \int_{\Omega_\varepsilon}
\varphi \, dx \quad  m=1, 2.
\end{equation}

Due to the regularity properties of solutions to elliptic
problems we have
\begin{equation}\label{est-0}
\sup_{x\in \Omega_\varepsilon} \bigl|\nabla_{\xi}\psi^{(m)}_0(\xi)
|_{\xi=\frac{x}{\varepsilon}}\bigr|= \sup_{\xi \in Q_0}
\bigl|\nabla_{\xi}\psi^{(m)}_0(\xi)\bigr|  \le C_0 \quad (m=1, 2).
\end{equation}
Using Cauchy's inequality with $\delta$ $(ab\le\delta
a^{2}+\frac{b^{2}}{4\delta},\ \ a, b, \delta>0)$ and
(\ref{est-0}), we deduce from (\ref{identities}) the following
estimates (m=1, 2)
\begin{gather}\label{est-1}
\varepsilon \int_{\Xi^{(m)}_\varepsilon} \varphi^2 \, ds_x
 \le
C_1 \left( \varepsilon^2 \int_{\Omega_{\varepsilon}}|\nabla_{x}
\varphi |^2 \, dx + \int_{\Omega_{\varepsilon}} \varphi^2 \, dx
\right),
\\
\int_{\Omega_{\varepsilon}} \varphi^2 \, dx \le C_2 \left(
\varepsilon^2 \int_{\Omega_{\varepsilon}}|\nabla_{x} \varphi |^2
\, dx +
 \varepsilon \int_{\Xi^{(m)}_\varepsilon} \varphi^2 \, ds_x \right)
\qquad \forall\, \varphi \in H^1(\Omega_{\varepsilon},
\Gamma_\varepsilon), \label{est-2}
\end{gather}
where the constant $C_1$ and $C_2$ are independent of
$\varepsilon.$

\begin{remark}
In  what follows all constants $\{C_{i}\}$ and $\{c_{i}\}$ in
inequalities are independent of the parameter $\varepsilon.$
\end{remark}

It follows from (\ref{est-1}) and (\ref{cond-1}) that
\begin{equation}\label{est-3}
\sqrt{\varepsilon} \sum\nolimits_{m=1}^2
\|g^{(m)}_\varepsilon\|_{L^2(\Xi^{(m)}_\varepsilon)} \le C_3.
\end{equation}
Also with the help of (\ref{est-1}) and (\ref{est-2}) it is easy
to prove that the usual norm $\|\cdot\|_{H^{1}(\Omega_{\varepsilon
})}$ is uniformly equivalent with respect to $\varepsilon$  to a
new norm
\begin{equation*}
\|u\|_\varepsilon:=\Bigl(\int_{\Omega_{\varepsilon}}|\nabla
u|^2\,dx + \varepsilon \int_{\Xi_\varepsilon}u^2 \,
ds_x\Bigr)^{1/2}
\end{equation*}
in the space $H^1(\Omega_\varepsilon, \Gamma_\varepsilon),$ i.e.,
there exist constants $C_{3}>0,$ $C_4 > 0$ and $\varepsilon_{0}>0$
such that for any $\varepsilon \in (0,\varepsilon _{0})$ and $u
\in H^1(\Omega _\varepsilon, \Gamma_\varepsilon)$ the following
relations hold
\begin{equation}\label{equiv-norms}
C_3\|u\|_{H^{1}(\Omega_{\varepsilon })}\leq\|u\|_{\varepsilon }
\le C_4\|u\|_{H^{1}(\Omega_{\varepsilon })}.
\end{equation}

\subsection{Existence and uniqueness of the solution to problem
(\ref{bvpipd})}

Associated with (\ref{bvpipd}), we consider the energy functional
\begin{equation}\label{energy functional}
I_\varepsilon[u]:= \tfrac{1}{2} \int\limits_{\Omega_\varepsilon}
a_{ij}^\varepsilon (x)\, \partial_{x_i}u \,
\partial_{x_j}u\, dx +\varepsilon \sum_{m=1}^2 \Bigl(
\int\limits_{\Xi^{(m)}_\varepsilon} K^{(m)}(u)\, ds_x -
\int\limits_{\Xi^{(m)}_\varepsilon} g^{(m)}_\varepsilon u\, ds_x\Bigr)
-\int\limits_{\Omega_\varepsilon} f_\varepsilon u\, dx
\end{equation}
on $H^1(\Omega_\varepsilon, \Gamma_\varepsilon),$ where
\begin{equation}\label{}
K^{(m)}(z)=\int^{z}_{0}\kappa_m(t)\,dt\quad \forall\, z\in \mathbb R,
\quad m=1, 2.
\end{equation}

It is easy to prove that if $u_\varepsilon$ is a minimizer of
$I_\varepsilon$ at a fixed value of $\varepsilon,$ then
$u_\varepsilon$ is a weak solution to problem (\ref{bvpipd}).

\begin{theorem}
 At each fixed value of $\varepsilon$
problem (\ref{bvpipd}) has exactly one solution $u_\varepsilon\in
H^1(\Omega_\varepsilon, \Gamma_\varepsilon)$ for which the
following estimate
\begin{equation}\label{uniform estimate}
  \|u_\varepsilon\|_{H^1(\Omega_\varepsilon)} \le
  C_1\Bigl(1 + \|f_\varepsilon\|_{L^2(\Omega_\varepsilon)} +
\sqrt{\varepsilon} \sum\nolimits_{m=1}^2
\|g^{(m)}_\varepsilon\|_{L^2(\Xi^{(m)}_\varepsilon)}\Bigr) \le C_2
\end{equation}
holds, where the constants $C_1$ and $C_2$ are independent of
$\varepsilon,$ $f_\varepsilon, g^{(m)}_\varepsilon$ and
$u_\varepsilon.$
\end{theorem}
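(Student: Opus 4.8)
The plan is to obtain existence and uniqueness by the direct method of the calculus of variations applied to the energy functional $I_\varepsilon$ of (\ref{energy functional}), and then to derive the a priori bound (\ref{uniform estimate}) by testing the integral identity (\ref{inttot}) against the solution itself. Throughout $\varepsilon$ is fixed, but I would keep track of every constant to ensure the final estimate is uniform in $\varepsilon$.

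First I would establish that $I_\varepsilon$ is strictly convex and coercive on $H^1(\Omega_\varepsilon, \Gamma_\varepsilon)$. Strict convexity follows because the quadratic form $\int_{\Omega_\varepsilon} a_{ij}^\varepsilon\,\partial_{x_i}u\,\partial_{x_j}u\,dx$ is strictly convex by the ellipticity condition (\ref{elliptic-cond}), each $K^{(m)}$ is convex since $(K^{(m)})'' = \kappa'_m \ge c_1 > 0$ by (\ref{cond-2}), and the remaining terms are linear. For coercivity I would use the lower bound $K^{(m)}(z) \ge \tfrac{c_1}{2}z^2 + \kappa_m(0)\,z$ that comes from (\ref{cond-2}), bound the gradient term below by $\varkappa_1\int_{\Omega_\varepsilon}|\nabla u|^2\,dx$, and control the linear contributions $\int f_\varepsilon u$, $\varepsilon\int g^{(m)}_\varepsilon u$ and $\varepsilon\kappa_m(0)\int u$ by Cauchy's inequality with $\delta$, absorbing them into the quadratic terms. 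Here the norm equivalence (\ref{equiv-norms}) together with (\ref{est-1})--(\ref{est-2}) is what converts these bounds into coercivity with respect to the full $H^1(\Omega_\varepsilon)$-norm.

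Next comes weak lower semicontinuity: since $I_\varepsilon$ is convex and strongly continuous it is weakly lower semicontinuous, and the only delicate terms are the boundary integrals. These I would handle by noting that at fixed $\varepsilon$ the trace operator $H^1(\Omega_\varepsilon, \Gamma_\varepsilon)\to L^2(\Xi_\varepsilon)$ is compact, so a weakly convergent minimizing sequence converges strongly in $L^2(\Xi_\varepsilon)$ and the boundary terms pass to the limit. Thus a minimizing sequence, bounded by coercivity, has a weak limit that is a minimizer, and strict convexity forces it to be unique; the observation already recorded before the statement identifies this minimizer with the weak solution of (\ref{bvpipd}).

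Finally, for the estimate I would take $\varphi = u_\varepsilon$ in (\ref{inttot}). Using $\kappa_m(u_\varepsilon)\,u_\varepsilon \ge c_1 u_\varepsilon^2 + \kappa_m(0)\,u_\varepsilon$ (monotonicity from (\ref{cond-2})) and ellipticity, the left-hand side is bounded below by $\varkappa_1\int_{\Omega_\varepsilon}|\nabla u_\varepsilon|^2\,dx + c_1\varepsilon\sum_m\int_{\Xi^{(m)}_\varepsilon} u_\varepsilon^2\,ds_x \ge c\,\|u_\varepsilon\|_\varepsilon^2$, minus the lower-order term $\varepsilon\sum_m |\kappa_m(0)|\int_{\Xi^{(m)}_\varepsilon}|u_\varepsilon|\,ds_x$. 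The right-hand side, together with this last term, is estimated by Cauchy--Schwarz and Young's inequality, where $\varepsilon\int_{\Xi^{(m)}_\varepsilon}|u_\varepsilon|\,ds_x \le C\sqrt{\varepsilon}\,\|u_\varepsilon\|_{L^2(\Xi^{(m)}_\varepsilon)}$ uses $\varepsilon|\Xi^{(m)}_\varepsilon|\le C$; this is precisely the source of the summand $1$ in (\ref{uniform estimate}). After dividing by $\|u_\varepsilon\|_\varepsilon$ and invoking (\ref{equiv-norms}) one obtains the first inequality in (\ref{uniform estimate}), and the second follows from (\ref{est-3}) and the boundedness of $\|f_\varepsilon\|_{L^2(\Omega)}$ guaranteed by the strong convergence in (\ref{cond-1}). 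The main obstacle is essentially bookkeeping: ensuring that every constant arising from the trace and Poincar\'e-type inequalities on $\Xi_\varepsilon$ is $\varepsilon$-uniform, which is exactly what (\ref{est-1}), (\ref{est-2}) and (\ref{equiv-norms}) were prepared to supply.
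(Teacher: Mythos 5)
Your proposal is correct, and its skeleton (direct method for existence, strict convexity for uniqueness, testing (\ref{inttot}) with $\varphi=u_\varepsilon$ for the estimate) coincides with the paper's; the one place where you genuinely diverge is the weak lower semicontinuity step. The paper does \emph{not} argue on the boundary integrals directly: it first uses the identities (\ref{identities}) to convert the terms $\varepsilon\int_{\Xi^{(m)}_\varepsilon}K^{(m)}(u)\,ds_x$ and $\varepsilon\int_{\Xi^{(m)}_\varepsilon}g^{(m)}_\varepsilon u\,ds_x$ into volume integrals over $\Omega_\varepsilon$, so that $I_\varepsilon$ becomes $\int_{\Omega_\varepsilon}L(\nabla u,u,x)\,dx$ for an explicit Lagrangian $L$, and then invokes the standard criterion (Evans, Ch.~8.2) that uniform convexity of $L$ in the gradient variable yields weak lower semicontinuity and a minimizer. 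You instead keep the boundary terms as they are and use that, at fixed $\varepsilon$, the trace operator $H^1(\Omega_\varepsilon,\Gamma_\varepsilon)\to L^2(\Xi_\varepsilon)$ is compact (or, even more simply, that a convex strongly continuous functional is weakly lower semicontinuous), so the boundary terms pass to the limit along a weakly convergent minimizing sequence; with the quadratic growth of $K^{(m)}$ controlled by the Lipschitz bound (\ref{cond-2}), this argument is sound. Your route is more elementary and perfectly adequate for a fixed-$\varepsilon$ existence statement, while the paper's identity trick keeps everything in volume integrals, which is consistent with its overall strategy of replacing surface integrals via the cell functions $\psi^{(m)}_0$ (the same maneuver reappears in the convergence and approximation proofs, where uniformity in $\varepsilon$ does matter). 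The a priori estimate part of your argument, including the observation that the constant $1$ in (\ref{uniform estimate}) originates from the $\kappa_m(0)$ term via $\varepsilon|\Xi^{(m)}_\varepsilon|\le C$, matches the paper's derivation using (\ref{est-4}), (\ref{equiv-norms}), (\ref{est-1}), (\ref{est-3}) and (\ref{cond-1}).
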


\begin{proof}
 Integrating inequalities in (\ref{cond-2}), we obtain
\begin{equation}\label{est-4}
c_1 t^2 + \kappa_m(0)\,t \le \kappa_m(t)\, t \le c_2 t^2 +
\kappa_m(0)\, t \quad \forall\, t\in\mathbb R,
\end{equation}
whence it follows that
\begin{equation}\label{est-5}
\frac{c_1}{2} \, z^{2} +  \kappa_m(0) z \le  K^{(m)}(z)\le
\frac{c_2}{2} \, z^{2} + \kappa_m(0)z \quad \forall \, z\in \mathbb R
\quad m=1, 2.
\end{equation}

 Using (\ref{equiv-norms}), (\ref{est-4}), (\ref{est-5}),
(\ref{cond-1}) and the same arguments as in Theorem~1
(\cite{Mel-m2as-08}), we can prove the coercitivity condition on
$I,$ i.e., the following inequality
\begin{equation}\label{coercitiv-cond}
I_\varepsilon[u]\ge C_1 \|u\|^2_{H^{1}(\Omega_{\varepsilon})} -
C_2
\end{equation}
holds for any function $u\in H^{1}(\Omega_\varepsilon,
\Gamma_\varepsilon).$

With the help of (\ref{identities}) we can re-write the energy
functional as
\begin{multline*}
I_\varepsilon[u]= \frac{1}{2} \int_{\Omega_\varepsilon}
a_{ij}^\varepsilon (x)\, \partial_{x_i}u \,
\partial_{x_j}u\, dx
+ \sum_{m=1}^2\Bigl( \varepsilon \int_{\Omega_\varepsilon} a_{ij}^\varepsilon
\, \partial_{\xi_j}\bigl(\psi_0
(\xi)\bigr)|_{\xi=\frac{x}{\varepsilon}}\, \kappa_m (u) \,
\partial_{x_i}u\, dx +
\\
+ q_m \int\limits_{\Omega_\varepsilon} K^{(m)}(u)\, dx
 - \varepsilon \int\limits_{\Omega_\varepsilon} a_{ij}^\varepsilon
\partial_{\xi_j}\bigl(\psi_0 (\xi)\bigr)|_{\xi=\frac{x}{\varepsilon}}
\bigl( u \partial_{x_i}g^{(m)}_\varepsilon + g^{(m)}_\varepsilon
\partial_{x_i}u \bigr) \, dx - 
\\
- q_m \int\limits_{\Omega_\varepsilon} g^{(m)}_\varepsilon u\, dx\Bigr) - 
\int\limits_{\Omega_\varepsilon} f_\varepsilon u\, dx.
\end{multline*}
Consider the function
\begin{multline*}
L(p,t,x)=\frac{1}{2} a_{ij}^\varepsilon p_i p_j+
\sum_{m=1}^2\Bigl( \varepsilon a_{ij}^\varepsilon \,
\partial_{\xi_j}\bigl(\psi_0(\xi)\bigr)|_{\xi=\frac{x}{\varepsilon}}\,
\kappa_m(t) \, p_i + q_m  K^{(m)}(t) -
\\
- \varepsilon  a_{ij}^\varepsilon
\partial_{\xi_j}\bigl(\psi_0 (\xi)\bigr)|_{\xi=\frac{x}{\varepsilon}}
\bigl( t \partial_{x_i}g^{(m)}_\varepsilon + g^{(m)}_\varepsilon
p_i \bigr) - q_m t g^{(m)}_\varepsilon \Bigr) - f_\varepsilon t.
\end{multline*}
Since
$$
\partial^2_{p_i p_j}L(p,t,x)\,  \eta_i \eta_j=
2^{-1}a_{ij}^\varepsilon(x) \eta_i \eta_j \ge \varkappa_1 |\eta|^2
\quad \forall\, p, \eta \in \mathbb{R}^n,\ \ x\in
\Omega_\varepsilon,
 $$
the function $L$ is uniformly convex in $p$ for each $x\in
\Omega_\varepsilon.$ This means that $I[\cdot]$ is weakly lower
semicontinuous on $H^1(\Omega_\varepsilon, \Gamma_\varepsilon)$
and there exists at least one minimizer (see
\cite[Chapter~8.2]{Evans}).

Thanks to (\ref{cond-2}) it is easy to prove the uniqueness of
this minimizer (see  Theorem~1~(\cite{Mel-m2as-08})).

Finally, let us deduce the uniform estimate (\ref{uniform
estimate}). Denote by $u_\varepsilon$ the solution to problem
(\ref{bvpipd}). Setting $\varphi=u_\varepsilon$ in (\ref{inttot})
and taking into account (\ref{elliptic-cond}) and the left
inequality in (\ref{est-4}), we get
\begin{equation*}
\varkappa_1 \int_{\Omega_\varepsilon}|\nabla
u_\varepsilon|^{2}\,dx + \varepsilon\,
c_1\int_{\Xi_\varepsilon}u_\varepsilon^{2}\,ds_{x} + \varepsilon
\kappa_m(0) \sum_{m=1}^2\int_{\Xi^{(m)}_\varepsilon}
u_\varepsilon\,ds_x
 \le
\int_{\Omega_\varepsilon} f_\varepsilon u_\varepsilon\, dx
+\varepsilon \sum_{m=1}^2\int_{\Xi^{(m)}_\varepsilon}
g^{(m)}_\varepsilon u_\varepsilon\, ds_x
\end{equation*}
from which
\begin{multline*}
c_2\Bigl(\int_{\Omega_\varepsilon}|\nabla u_\varepsilon|^{2}\,dx +
\varepsilon\int_{\Xi_\varepsilon}u^{2}_\varepsilon\,d\sigma_{x}\Bigr)
\le 
c_3 \sqrt{\varepsilon}\|u_\varepsilon\|_{L^2(\Xi_\varepsilon)} +  
\|f_\varepsilon\|_{L^2(\Omega_\varepsilon)}
\|u_\varepsilon\|_{L^2(\Omega_\varepsilon)} + 
\\ 
+ \varepsilon
\sum_{m=1}^2 \|g^{(m)}_\varepsilon\|_{L^2(\Xi^{(m)}_\varepsilon)}
\|u_\varepsilon\|_{L^2(\Xi^{(m)}_\varepsilon)}.
\end{multline*}
Using (\ref{equiv-norms}) and (\ref{est-1}), we derive the first
part of the estimate (\ref{uniform estimate}) from the last
inequality, and then the second one on the basis of (\ref{cond-1})
and (\ref{est-3}). \qquad \end{proof}

\section{Convergence theorem}

In the sequel, $\widetilde{y}$ denotes the zero-extension of a
function $y$ defined on $\Omega_\varepsilon$ into the domain
$\Omega.$ Also we introduce the following characteristic function
\begin{equation}\label{charach-function}
\chi_{Q_0}(\xi) =
  \begin{cases}
    1, & x \in Q_0,
    \\
0, & x\in \square \setminus Q_0.
  \end{cases}
\end{equation}
It is known that $\chi_{Q_0}^\varepsilon(x):=\chi_{Q_0}(\frac{x}{\varepsilon})
\stackrel{w}{\longrightarrow} |Q_0|$\ weakly in $L^2(\Omega)$ as
$\varepsilon\to 0.$

\begin{lemma}\label{convergence-lemma}
Let $\{v_\varepsilon\}_{\varepsilon>0}$ be a sequence in
$H^1(\Omega_\varepsilon, \Gamma_\varepsilon)$ uniformly bounded in
$\varepsilon$ in $H^1(\Omega_\varepsilon, \Gamma_\varepsilon)$ and
such that
\[
\widetilde{\kappa_m(v_\varepsilon)} \to \zeta \quad \mbox{weakly
in}\ \ L^2(\Omega)\quad \mbox{as} \quad\varepsilon\to0 \quad
\quad (m=1, 2).
\]
Then for any function $\varphi \in
  H^1(\Omega_\varepsilon, \Gamma_\varepsilon)$
\begin{equation}\label{special convergence-1}
  \varepsilon\int_{\Xi^{(m)}_\varepsilon}
  \kappa_m(v_\varepsilon)\,\varphi\, ds_x \,\to\,
  q_m \int_{\Omega} \zeta(x)\, \varphi(x)\, dx \quad \mbox{as} \quad
  \varepsilon \to 0\quad  \quad (m=1, 2).
\end{equation}
\end{lemma}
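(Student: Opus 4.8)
The plan is to feed the integral identity (\ref{identities}) not with $\varphi$ itself but with the product $\kappa_m(v_\varepsilon)\varphi$. Since $\kappa_m$ is Lipschitz, $\kappa_m(v_\varepsilon)\in H^1(\Omega_\varepsilon)$ with $\partial_{x_i}\kappa_m(v_\varepsilon)=\kappa'_m(v_\varepsilon)\partial_{x_i}v_\varepsilon$ a.e., and for sufficiently regular $\varphi$ the product again belongs to $H^1(\Omega_\varepsilon,\Gamma_\varepsilon)$ (it still vanishes on $\Gamma_\varepsilon$). Substituting this product in place of the test function in (\ref{identities}) converts the surface integral we must control into two volume integrals,
\begin{multline*}
\varepsilon\int_{\Xi^{(m)}_\varepsilon}\kappa_m(v_\varepsilon)\varphi\,ds_x
= \varepsilon\int_{\Omega_\varepsilon} a^\varepsilon_{ij}\,\partial_{\xi_j}\psi^{(m)}_0(\xi)\big|_{\xi=\frac{x}{\varepsilon}}\,\partial_{x_i}\bigl(\kappa_m(v_\varepsilon)\varphi\bigr)\,dx \\
+ q_m\int_{\Omega_\varepsilon}\kappa_m(v_\varepsilon)\varphi\,dx,
\end{multline*}
and it remains to pass to the limit in each of them.

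For the first (gradient) term I would expand $\partial_{x_i}\bigl(\kappa_m(v_\varepsilon)\varphi\bigr)=\kappa'_m(v_\varepsilon)\,\partial_{x_i}v_\varepsilon\,\varphi+\kappa_m(v_\varepsilon)\,\partial_{x_i}\varphi$ and bound the two resulting pieces by the Cauchy--Schwarz inequality. Using the bound $a^\varepsilon_{ij}\le\varkappa_2$, the uniform estimate (\ref{est-0}) on $\nabla_\xi\psi^{(m)}_0$, the bound $\kappa'_m\le c_2$ from (\ref{cond-2}), the uniform $H^1$-bound on $v_\varepsilon$, and the Lipschitz estimate $\|\kappa_m(v_\varepsilon)\|_{L^2(\Omega_\varepsilon)}\le|\kappa_m(0)|\,|\Omega|^{1/2}+c_2\|v_\varepsilon\|_{L^2(\Omega_\varepsilon)}$, each piece is $O(\varepsilon)$ thanks to the explicit prefactor $\varepsilon$. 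Hence the whole gradient term tends to $0$.

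For the second (volume) term I would write $\int_{\Omega_\varepsilon}\kappa_m(v_\varepsilon)\varphi\,dx=\int_{\Omega}\widetilde{\kappa_m(v_\varepsilon)}\,\varphi\,dx$ and invoke the hypothesis $\widetilde{\kappa_m(v_\varepsilon)}\to\zeta$ weakly in $L^2(\Omega)$ tested against the fixed $\varphi\in L^2(\Omega)$, which gives convergence to $\int_\Omega\zeta\varphi\,dx$. Multiplying by $q_m$ and combining with the previous step yields exactly (\ref{special convergence-1}).

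The main obstacle is the very first step: for a general $\varphi\in H^1$ in dimension $n=3$ the product $\kappa_m(v_\varepsilon)\varphi$ need not lie in $H^1$ (its gradient is only guaranteed in $L^{3/2}$), so (\ref{identities}) cannot be applied verbatim. I would therefore first establish the statement for $\varphi\in C^\infty(\overline\Omega)$, where $\varphi$ and $\nabla\varphi$ are bounded, the product is genuinely in $H^1(\Omega_\varepsilon,\Gamma_\varepsilon)$, and the computation above is rigorous. The restriction is then removed by density: for $\varphi_\delta\to\varphi$ in $H^1(\Omega)$ the discrepancy $\varepsilon\int_{\Xi^{(m)}_\varepsilon}\kappa_m(v_\varepsilon)(\varphi-\varphi_\delta)\,ds_x$ is controlled, via Cauchy--Schwarz together with the trace estimate (\ref{est-1}) applied both to $\kappa_m(v_\varepsilon)$ (whose $\|\cdot\|_\varepsilon$-type norm is uniformly bounded by the $H^1$-bound on $v_\varepsilon$ and $\kappa'_m\le c_2$) and to $\varphi-\varphi_\delta$, by $C\|\varphi-\varphi_\delta\|_{H^1(\Omega)}$ uniformly in $\varepsilon$, while the right-hand side changes by at most $q_m\|\zeta\|_{L^2(\Omega)}\|\varphi-\varphi_\delta\|_{L^2(\Omega)}$; a standard three-term approximation argument then closes the gap for arbitrary $\varphi$.
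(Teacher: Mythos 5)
Your proof is correct and follows essentially the same route as the paper's: both substitute $\kappa_m(v_\varepsilon)\,\varphi$ as the test function in the identity (\ref{identities}), kill the resulting gradient term through the explicit factor $\varepsilon$ together with (\ref{cond-2}), (\ref{est-0}) and the uniform $H^1$ bound on $v_\varepsilon$, and pass to the limit in the volume term using the assumed weak convergence of $\widetilde{\kappa_m(v_\varepsilon)}$. The only difference is that the paper performs this substitution without comment, while you correctly flag that for a general $H^1$ test function the product need not be admissible and repair this by first taking $\varphi$ smooth and then arguing by density --- a technical refinement of, not a departure from, the paper's argument.
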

\begin{proof}
By virtue of (\ref{identities}) we have
\begin{multline*}
\varepsilon\int_{\Xi^{(m)}_\varepsilon}
  \kappa(v_\varepsilon)\,\varphi\, ds_x=
\varepsilon \int_{\Omega_\varepsilon} a_{ij}^\varepsilon(x) \,
\partial_{\xi_j} \psi^{(m)}_0(\xi)|_{\xi=\frac{x}{\varepsilon}} \,
\bigl( \kappa'(v_\varepsilon)\,\partial_{x_i}v_\varepsilon\,
\varphi + \kappa(v_\varepsilon)\,\partial_{x_i}\varphi\bigr) \,dx
+ \\
+ q_m \int_{\Omega} \widetilde{\kappa(v_\varepsilon)} \,\varphi\,
\, dx, \quad  m=1, 2.
\end{multline*}
Thanks to  the Lemma's condition, (\ref{cond-2}) and
(\ref{est-0}), the first summand vanishes and the second one tends
to $q_m \int_{\Omega} \zeta(x)\, \varphi \,dx$
  as $\varepsilon\to0$ \ $m=1, 2.$
\end{proof}

\begin{remark}
From Lemma~\ref{convergence-lemma}  it follows that for any
sequence $\{v_\varepsilon\}_{\varepsilon>0}\in
H^1(\Omega_\varepsilon, \Gamma_\varepsilon),$ which is uniformly
bounded with respect to $\varepsilon,$  there exists a subsequence
$\{\varepsilon'\}\subset \{\varepsilon\}$ (again denoted by
$\{\varepsilon\})$ and a function $\zeta\in L^2(\Omega)$ such that
the convergences (\ref{special convergence-1}) hold.
\end{remark}

Using (\ref{cond-1}),  we can prove similarly as in
Lemma~\ref{convergence-lemma} that for any function $\varphi \in
H^1(\Omega_\varepsilon, \Gamma_\varepsilon)$
\begin{equation}\label{special convergence-2}
  \varepsilon\int_{\Xi^{(m)}_\varepsilon}
g^{(m)}_\varepsilon  \varphi\, ds_x \,\to\,
  |S^{(m)}| \int_{\Omega} g^{(m)}_0(x) \,  \varphi(x)\, dx \quad \mbox{as} \quad
  \varepsilon \to 0\quad
\quad (m=1, 2).
\end{equation}

Consider $1-$periodic solutions $T_l, \ l=1,\ldots,n,$ to the
following problems
\begin{equation} \label{cell-problem} \left\{
\begin{array}{l}
{\cal L}_{\xi \xi} \bigl( T_l \bigr) = - \partial_{\xi_i}a_{il} \
\ \textrm{in} \ \ Q_0,
\\[1mm]
\sigma_\xi\bigl(T_l\bigr)= - a_{il}\, \nu_i  \ \ \textrm{on} \ \
S,
\quad \langle T_l \rangle_{Q_0}=0.
\end{array}
\right.
\end{equation}
From Lemma~\ref{cell-lemma} it follows the existence and
uniqueness of  the solutions to these problems.

With the help of $T_l, \ l=1,\ldots,n,$ we define the coefficients
of the homogenized matrix $\{\widehat{a}_{ij}\}$ by the formula
\begin{equation}\label{hom-matrix}
\widehat{a}_{ij}= \langle a_{ij} + a_{ik}\partial_{\xi_k}T_j
\rangle_{Q_0}, \quad i,j\in \{1, 2, \ldots, n\}.
\end{equation}
It is easy to see that
\begin{equation}\label{hom-matrix-1}
\widehat{a}_{ij}= \langle a_{kl}\, \partial_{\xi_k}(\xi_i + T_i)
\, \partial_{\xi_l}(\xi_j + T_j)\rangle_{Q_0}
\end{equation}
i.e.,  the matrix $\{\widehat{a}_{ij}\}$ is symmetric and it is
well known that it is elliptic (see for instance \cite{OYS}).

\begin{theorem} For the solution $u_\varepsilon$ to problem (\ref{bvpipd})
there exists the following convergences
\begin{equation}\label{main-convergences}
\left.
\begin{array}{rcll}
\widetilde{u_\varepsilon} &\stackrel{w}{\longrightarrow}& \,
|Q_0|\,v_0
    &\mbox{weakly in}\quad L^2(\Omega),
\\[2mm]
\widetilde{a_{ij}^\varepsilon\, \partial_{x_j}u_\varepsilon}
&\stackrel{w}{\longrightarrow}& \, \widehat{a}_{ij}
\partial_{x_j}v_0
    &\mbox{weakly in}\quad L^2(\Omega), \quad i=1,\ldots,n,
\end{array} \right\}
\quad \text{as} \quad \varepsilon \to 0,
\end{equation}
where $v_0$ is a unique weak solution to the following
problem
\begin{equation}
\label{limit-problem} \left\{
   \begin{array}{rcll}
- \widehat{a}_{ij}\,\partial^2_{x_i x_j} v_0(x) +
\sum\limits_{m=1}^2 |S^{(m)}|\, \kappa_m(v_0(x)) &=&
\sum\limits_{m=1}^2 |S^{(m)}|\, g^{(m)}_0(x) + |Q_0|\, f_0(x),&
x\in\Omega,
\\[2mm]
v_0(x)&=&0,& x\in \partial\Omega,
   \end{array}
\right.
\end{equation}
which is called {\it homogenized problem} for (\ref{bvpipd}).

Furthermore, the following energy convergence holds as
$\varepsilon \to 0:$
\begin{multline}\label{energy}
E_\varepsilon(u_\varepsilon):= \int_{\Omega_\varepsilon}
a^\varepsilon_{ij}(x)\,\partial_{x_j}u_\varepsilon
\,\partial_{x_i}u_\varepsilon\,dx + \varepsilon
\sum\limits_{m=1}^2 \int_{\Xi^{(m)}_\varepsilon}
\kappa_m(u_\varepsilon)\,u_\varepsilon \,ds_{x}\, \to
\\
\int_{\Omega} \widehat{a}_{ij}\,\partial_{x_j}v_0
\,\partial_{x_i}v_0 \, dx\,+\, \sum\limits_{m=1}^2 |S^{(m)}|
\int_{\Omega}\kappa_m(v_0)\, v_0\,dx =: E_0(v_0).
\end{multline}

\end{theorem}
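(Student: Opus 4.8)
The plan is to argue by compactness and identification of weak limits, in the spirit of Zhikov's non-extension technique \cite{Zhikov-1} and the companion paper \cite{Mel-m2as-08}. The uniform bound (\ref{uniform estimate}) shows that the zero-extensions $\widetilde{u_\varepsilon}$, $\widetilde{\partial_{x_j}u_\varepsilon}$ and $\widetilde{a_{ij}^\varepsilon\partial_{x_j}u_\varepsilon}$ are bounded in $L^2(\Omega)$; passing to a subsequence I would extract weak limits $\widetilde{u_\varepsilon}\rightharpoonup u_*$, $\widetilde{a_{ij}^\varepsilon\partial_{x_j}u_\varepsilon}\rightharpoonup\xi_i$, and (by the Remark after Lemma~\ref{convergence-lemma}) $\widetilde{\kappa_m(u_\varepsilon)}\rightharpoonup\zeta_m$. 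A Zhikov-type compactness argument, using that $Q_0$ is connected and that $\chi_{Q_0}^\varepsilon\,dx\rightharpoonup|Q_0|\,dx$, then gives $u_*=|Q_0|\,v_0$ for some $v_0\in H^1_0(\Omega)$ (the zero trace coming from $u_\varepsilon=0$ on $\Gamma_\varepsilon$). Passing to the limit in (\ref{inttot}) with fixed $\varphi\in C_0^\infty(\Omega)$ — the volume term via $\chi_{Q_0}^\varepsilon\rightharpoonup|Q_0|$ together with $f_\varepsilon\to f_0$ strongly, and the boundary terms via Lemma~\ref{convergence-lemma} and (\ref{special convergence-2}) — produces the limit identity
\begin{equation*}
\int_\Omega\xi_i\,\partial_{x_i}\varphi\,dx+\sum_{m=1}^2 q_m\int_\Omega\zeta_m\,\varphi\,dx=|Q_0|\int_\Omega f_0\,\varphi\,dx+\sum_{m=1}^2|S^{(m)}|\int_\Omega g^{(m)}_0\,\varphi\,dx ,
\end{equation*}
with $\xi_i$ and $\zeta_m$ still to be identified.

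Second, I would identify the flux by the oscillating test-function method built on the cell solutions (\ref{cell-problem}). With $w_l^\varepsilon(x):=x_l+\varepsilon T_l(x/\varepsilon)$, the field $a_{ij}^\varepsilon\partial_{x_j}w_l^\varepsilon$ is divergence-free in $\Omega_\varepsilon$ and has vanishing co-normal derivative on $\Xi_\varepsilon$, precisely by the equations and boundary conditions in (\ref{cell-problem}); moreover $\chi_{Q_0}^\varepsilon\,a_{ij}^\varepsilon\partial_{x_j}w_l^\varepsilon\rightharpoonup\widehat a_{il}$ by (\ref{hom-matrix}). Testing (\ref{inttot}) with $\phi\,w_l^\varepsilon$ and the equation for $w_l^\varepsilon$ with $\phi\,u_\varepsilon$ ($\phi\in C_0^\infty(\Omega)$) and subtracting, the symmetric principal parts cancel thanks to $a_{ij}=a_{ji}$, and the surviving difference of products of weakly convergent sequences is resolved by compensated compactness in the perforated domain; in the limit this yields $\xi_i=\widehat a_{ij}\partial_{x_j}v_0$, i.e. the second convergence in (\ref{main-convergences}).

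Third — and this is the main obstacle — I would identify the nonlinear boundary limit $\zeta_m$. The difficulty is that $\kappa_m$ is nonlinear while only weak convergence of the traces is at hand, so I would exploit the monotonicity (\ref{cond-2}) via a Minty–Browder argument fed by the energy identity. Testing (\ref{inttot}) with $\varphi=u_\varepsilon$ gives $E_\varepsilon(u_\varepsilon)=\int_{\Omega_\varepsilon}f_\varepsilon u_\varepsilon\,dx+\varepsilon\sum_m\int_{\Xi^{(m)}_\varepsilon}g^{(m)}_\varepsilon u_\varepsilon\,ds_x=:R_\varepsilon$, whose right-hand side converges (using $f_\varepsilon\to f_0$ strongly, the identity (\ref{identities}) applied with $\varphi=g^{(m)}_\varepsilon u_\varepsilon$, and $\widetilde{u_\varepsilon}\rightharpoonup|Q_0|v_0$) to $R_0:=|Q_0|\int_\Omega f_0 v_0\,dx+\sum_m|S^{(m)}|\int_\Omega g^{(m)}_0 v_0\,dx$. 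Combining the weak lower semicontinuity $\liminf_\varepsilon\int_{\Omega_\varepsilon}a_{ij}^\varepsilon\partial_{x_j}u_\varepsilon\partial_{x_i}u_\varepsilon\,dx\ge\int_\Omega\widehat a_{ij}\partial_{x_j}v_0\partial_{x_i}v_0\,dx$ (itself a corrector estimate resting on (\ref{hom-matrix-1}) and compensated compactness) with the limit identity tested against $v_0$ gives $\limsup_\varepsilon\varepsilon\sum_m\int_{\Xi^{(m)}_\varepsilon}\kappa_m(u_\varepsilon)u_\varepsilon\,ds_x\le\sum_m q_m\int_\Omega\zeta_m v_0\,dx$. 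Inserting this into the nonnegative quantity $\varepsilon\sum_m\int_{\Xi^{(m)}_\varepsilon}(\kappa_m(u_\varepsilon)-\kappa_m(w_m))(u_\varepsilon-w_m)\,ds_x\ge0$, passing to the limit in the remaining terms by Lemma~\ref{convergence-lemma} and (\ref{identities}), and choosing \emph{independent} test functions $w_m=v_0-\lambda\theta_m$ for the two families, then letting $\lambda\to0$ and varying $\theta_1,\theta_2$ separately, yields $q_m\zeta_m=|S^{(m)}|\kappa_m(v_0)$, i.e. $\zeta_m=|Q_0|\kappa_m(v_0)$. Hence $v_0$ is a weak solution of (\ref{limit-problem}); its strict monotonicity and coercivity give uniqueness, so the full sequence (not just a subsequence) converges, which proves (\ref{main-convergences}).

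Finally, the energy convergence (\ref{energy}) is then immediate: by construction $E_\varepsilon(u_\varepsilon)=R_\varepsilon\to R_0$, while testing (\ref{limit-problem}) with $v_0$ shows $R_0=\int_\Omega\widehat a_{ij}\partial_{x_j}v_0\partial_{x_i}v_0\,dx+\sum_m|S^{(m)}|\int_\Omega\kappa_m(v_0)v_0\,dx=E_0(v_0)$. The two points demanding the most care are the separation of the two hole-families in the Minty step and the compensated-compactness passages to the limit carried out directly in the perforated domain, without extension operators.
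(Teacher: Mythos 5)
Your proposal is correct and follows essentially the same route as the paper: compactness of the zero-extensions, identification of the flux via the cell solutions $T_l$ together with Zhikov's theorem for periodic solenoidal fields, a Minty--Browder monotonicity argument fed by the energy identity to pass to the limit in the nonlinear boundary terms, and uniqueness of the homogenized problem to upgrade subsequential convergence to convergence of the whole sequence, with the energy convergence then read off from the already-established limit relations. The only (harmless) refinement is your use of independent test functions $w_m=v_0-\lambda\theta_m$ for the two hole families in the Minty step, which identifies each limit $\zeta_m=|Q_0|\,\kappa_m(v_0)$ separately, whereas the paper uses a single test function $\varphi$ and obtains only the sum $\sum_{m=1}^2 q_m\zeta_m=\sum_{m=1}^2|S^{(m)}|\,\kappa_m(v_0)$ in (\ref{zeta}) --- which is all that is needed for the limit identity (\ref{3.17}).
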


\begin{proof}
{\bf 1.} It follows from (\ref{uniform estimate}) and
(\ref{cond-2})  that the values
\[
\|\widetilde{u_\varepsilon} \|_{L^2(\Omega)},\quad \|
\widetilde{a_{ij}^\varepsilon\,
\partial_{x_j}u_\varepsilon}\|_{L^2(\Omega)}, \ \
i=1,\ldots,n,\quad
\|\widetilde{\kappa_m(u_\varepsilon)}\|_{L^2(\Omega)}, \ \ m=1, 2,
\]
are uniformly bounded with respect to $\varepsilon.$ Hence
there exists a subsequence $\{\varepsilon '\}\subset\{\varepsilon \},$
again denoted by $\{\varepsilon\},$ such that
\begin{equation}\label{3.0}
\left.
\begin{array}{rcll}
\widetilde{u_\varepsilon} &\stackrel{w}{\longrightarrow}& \,
|Q_0|\, v_0    &\mbox{weakly in}\quad L^2(\Omega),
\\[2mm]
\widetilde{a_{ij}^\varepsilon\, \partial_{x_j}u_\varepsilon}
&\stackrel{w}{\longrightarrow}& \, \gamma_i
    &\mbox{weakly in}\quad L^2(\Omega), \quad i=1,\ldots,n,
\\[2mm]
\widetilde{\kappa_m(u_\varepsilon)}
&\stackrel{w}{\longrightarrow}& \, \zeta_m &\mbox{weakly in}\quad
L^2(\Omega), \quad  m=1, 2,
\end{array} \right\}
\quad \text{as} \quad \varepsilon \to 0,
\end{equation}
where $v_0,$  \ $\gamma_i, \, i=1,\ldots,n,$ \ $\zeta_m, \, m=1,
2,$ are some functions which will be determined in what follows.

{\bf 2.} Obviously the $\varepsilon$-periodic functions $T_l\left(
\frac{\cdot}{\varepsilon} \right), \
l=1,\ldots,n,$ defined in (\ref{cell-problem}) satisfy the
following relations
$$
\left\{
   \begin{array}{l}
\partial_{x_i} \left(  a_{ij}(\xi) \partial_{\xi_j}
T_l(\xi)|_{\xi=\frac{x}{\varepsilon}} \right) +
\partial_{x_i} a_{il}^{\varepsilon}(x)=0 \quad \forall\, x\in  \Omega_\varepsilon,
\\[3.0\jot]
\bigl(a_{ij}(\xi)\, \partial_{\xi_j}
T_l(\xi)\, \nu_i(\xi) + a_{il}(\xi) \nu_i(\xi)\bigr)\bigl|_{\xi=\frac{x}{\varepsilon}} =0 \quad \forall\, x\in \Xi_\varepsilon.
   \end{array}
\right.
$$
Multiplying the first relation by $u_\varepsilon\, \phi,$ where
$\phi$ is arbitrary function from $C^\infty_0(\Omega),$ and
integrating over $\Omega_\varepsilon,$ we obtain
\begin{equation}\label{3.1}
\int_{\Omega_\varepsilon}\bigl(a_{ij}(\xi)\, \partial_{\xi_j}
T_l(\xi) + a_{il}(\xi) \bigr)|_{\xi=\frac{x}{\varepsilon}}\,
\bigl(u_\varepsilon \,\partial_{x_i}\phi +
\phi\,\partial_{x_i}u_\varepsilon\bigr)\,dx=0,\quad \ \
l=\overline{1,n}.
\end{equation}

Put the following test-function $\varphi(x)=\varepsilon
T_l(\frac{x}{\varepsilon}) \phi(x), \ x\in \Omega_\varepsilon,$
into the integral identity (\ref{inttot}). The result is as
follows
\begin{multline}\label{3.2}
\int_{\Omega_\varepsilon}a_{ij}^\varepsilon(x)
\partial_{x_j}u_\varepsilon
\partial_{\xi_i} T_l(\xi)|_{\xi=\frac{x}{\varepsilon}}\,\phi(x)\,
dx + \varepsilon \int_{\Omega_\varepsilon}a_{ij}^\varepsilon(x)
\partial_{x_j}u_\varepsilon
\, T_l(\frac{x}{\varepsilon})\,\partial_{x_i}\phi(x)\, dx +
    \\
+ \varepsilon^2\sum_{m=1}^2\int_{\Xi^{(m)}_\varepsilon}
\kappa_m(u_\varepsilon)\, T_l \,\phi\,ds_x =\varepsilon
\int_{\Omega_\varepsilon} f_\varepsilon\, T_l\,\phi\, dx +
\varepsilon^2 \sum_{m=1}^2\int_{\Xi^{(m)}_\varepsilon}
g^{(m)}_\varepsilon \, T_l\,\phi\, ds_x.
\end{multline}
Using (\ref{cond-1}), (\ref{cond-2}) and the identities
(\ref{identities}), it follows from (\ref{3.2}) that
\begin{equation}\label{3.3}
\int_{\Omega_\varepsilon}a_{ij}^\varepsilon(x)
\partial_{x_j}u_\varepsilon
\partial_{\xi_i} T_l(\xi)|_{\xi=\frac{x}{\varepsilon}}\,\phi(x)\,
dx = {\cal O}(\varepsilon) \quad \text{as}\quad \varepsilon\to 0,
\quad  l=\overline{1,n}.
\end{equation}
Subtracting (\ref{3.2}) from (\ref{3.1}), we get
\begin{equation}\label{3.4}
\int_{\Omega}\bigl(a_{ij}(\xi)\, \partial_{\xi_j} T_l(\xi) +
a_{il}(\xi) \bigr)|_{\xi=\frac{x}{\varepsilon}}\,
\widetilde{u_\varepsilon} \,\partial_{x_i}\phi \,dx
 + \int_{\Omega} \widetilde{a_{il}^\varepsilon\, \partial_{x_i}u_\varepsilon}\,
\phi \,dx = {\cal O}(\varepsilon),\quad \ \ l=\overline{1,n}.
\end{equation}
In (\ref{3.4}) we regard that the functions $a_{ij}\,
\partial_{\xi_j}T_l + a_{il}, \ l=1,\ldots,n,$ are equal to zero on $B.$

Let us find the limit of the first summand in the left-hand side
of (\ref{3.4}). At first we note that the limit function $v_0$ in (\ref{3.0}) belongs to $H^1_0(\Omega)$ because of the conectedness
of the domain $\mathbb R^n \setminus \overline{\bigl({\cal B}^{(1)}\cup {\cal B}^{(2)}\bigr)}$ (see \cite{Zhikov-1}-\cite{Zhikov-Rychaho}). Since
$\bigl(a_{ij}(\xi)\, \partial_{\xi_j}T_l(\xi) +
a_{il}(\xi)\bigr)\nu_i(\xi)=0$ at $\xi\in S$  and  the
vector-functions
\begin{equation}\label{solenoidal}
{\bf F}_l=\bigl( a_{1j}(\xi)\, \partial_{\xi_j}T_l(\xi) +
a_{1l}(\xi)\, , \ldots ,\, a_{nj}(\xi)\, \partial_{\xi_j}T_l(\xi)
+ a_{nl}(\xi) \bigr), \quad l=1,\ldots,n,
\end{equation}
are solenoidal in $Q_0$ (see (\ref{cell-problem})), their
zero-extensions into $\square\setminus Q_0$ are also solenoidal in
weak sense, i.e.,
\[
\int_{Q_0} {\bf F}_l(\xi) \cdot \nabla\psi(\xi)\, d\xi =
\int_\square {\bf F}_l(\xi) \cdot \nabla\psi(\xi)\, d\xi = 0 \quad
\forall \psi\in C^\infty_{\rm per}(\square), \quad l=1,\ldots,n.
\]
Then using results by V.V.~Zhikov (see \cite[Th.~2.1]{Zhikov-1}), we get that
\[
\lim_{\varepsilon\to 0} \int_{\Omega}\bigl(a_{ij}(\xi)\,
\partial_{\xi_j} T_l(\xi) + a_{il}(\xi)
\bigr)|_{\xi=\frac{x}{\varepsilon}}\, \widetilde{u_\varepsilon}
\,\partial_{x_i}\phi \,dx = \int_{\Omega}\widehat{a}_{il}\, v_0
\,\partial_{x_i}\phi \,dx.
\]
As a results, it follows from (\ref{3.4}) in the limit passage as
$\varepsilon\to0$ that
\[
\int_{\Omega}\widehat{a}_{il}\, v_0 \,\partial_{x_i}\phi \,dx +
\int_\Omega \gamma_l\, \phi\, dx=0 \quad \forall \phi\in
C^\infty_0(\Omega), \quad (l=1,\ldots,n),
\]
i.e.,
\begin{equation}\label{3.5}
\gamma_l(x)=  \widehat{a}_{il}\, \partial_{x_i}v_0(x) \quad  \mbox{for a.e.} \ x\in
\Omega \quad (l=1,\ldots,n).
\end{equation}

{\bf 4.} Using the extension by zero and the identities
(\ref{identities}), we rewrite the integral identity
(\ref{inttot}) in the following way
\begin{multline}\label{new-inttot}
\int_{\Omega} \widetilde{a_{ij}^\varepsilon \,
\partial_{x_j}u_\varepsilon } \, \partial_{x_i} \varphi\,dx + 
\\ 
+ \sum_{m=1}^2\Biggl(\, \underbrace{\varepsilon
\int_{\Omega_\varepsilon} a_{ij}^\varepsilon(x) \,
\partial_{\xi_j} \psi^{(m)}_0(\xi)|_{\xi=\frac{x}{\varepsilon}} \,
\bigl( \kappa'_m(u_\varepsilon) \,\partial_{x_i}u_\varepsilon
\,\varphi + \kappa_m(u_\varepsilon) \,\partial_{x_i}\varphi\bigr)
\,dx} +
q_m \int_{\Omega} \widetilde{\kappa_m(u_\varepsilon)} \, \varphi\,
dx \Biggr) = 
\\ 
= \int_{\Omega} \chi_{Q_0}^\varepsilon\, f_\varepsilon \varphi\, dx + 
\sum_{m=1}^2\Biggl( \, \underbrace{\varepsilon
\int_{\Omega_\varepsilon} a_{ij}^\varepsilon(x) \,
\partial_{\xi_j} \psi^{(m)}_0(\xi)|_{\xi=\frac{x}{\varepsilon}} \,
\bigl( \partial_{x_i}g^{(m)}_\varepsilon\, \varphi +
g^{(m)}_\varepsilon \,\partial_{x_i}\varphi\bigr) \,dx}  +
\\
+ q_m \int_{\Omega} \chi_{Q_0}^\varepsilon\, g^{(m)}_\varepsilon
\, \varphi\, dx \Biggr)\quad \forall\, \varphi\in
C^\infty_0(\Omega).
\end{multline}

It is easy to see that the pointed summands in (\ref{new-inttot})
vanish as $\varepsilon\to 0;$ the first one due to (\ref{cond-2}),
(\ref{est-0}) and (\ref{uniform estimate}), the second one due to
(\ref{est-0}) and (\ref{cond-1}).

Taking into account (\ref{3.0}), (\ref{3.5}) and (\ref{cond-1}),
we pass to the limit in (\ref{new-inttot}) as $\varepsilon\to0.$
As a result we get the  identity
\begin{equation}\label{new-inttot-1}
\int_{\Omega} \widehat{a}_{ij} \,
\partial_{x_j}v_0 \, \partial_{x_i} \varphi\,dx
+ \sum_{m=1}^2 q_m \int_{\Omega} \zeta_m \, \varphi\, dx = |Q_0|
\int_{\Omega}  f_0 \varphi\, dx + \sum_{m=1}^2 |S^{(m)}|
\int_{\Omega} g^{(m)}_0 \, \varphi\, dx
\end{equation}
for any function $\varphi\in C^\infty_0(\Omega).$ Since the space
$C^\infty_0(\Omega)$ is dense in $H^1_0(\Omega),$ identity
(\ref{new-inttot-1}) is valid for any function $\varphi\in
H^1_0(\Omega).$

{\bf 5.} With the help of (\ref{cond-1}), (\ref{inttot}) and
(\ref{new-inttot-1}) we can find that
\begin{multline}\label{3.13}
\lim_{\varepsilon\to0} E_\varepsilon(u_\varepsilon) =
\lim_{\varepsilon\to 0} \Biggl(
\int_{\Omega_\varepsilon}f_\varepsilon\, u_\varepsilon\,dx +
\sum_{m=1}^2 \int_{\Xi^{(m)}_\varepsilon} g^{(m)}_\varepsilon \,
u_\varepsilon\, ds_x\Biggr) =
\lim_{\varepsilon\to 0} \Biggl( \int_{\Omega}f_\varepsilon\,
\widetilde{u_\varepsilon}\,dx + 
\\
+ \sum_{m=1}^2 \Bigl(\varepsilon
\int_{\Omega_\varepsilon} a_{ij}^\varepsilon(x) \,
\partial_{\xi_j} \psi^{(m)}_0|_{\xi=\frac{x}{\varepsilon}} \,
\bigl( \partial_{x_i}g^{(m)}_\varepsilon\, u_\varepsilon +
g^{(m)}_\varepsilon \,\partial_{x_i}u_\varepsilon\bigr) \,dx + q_m
\int_{\Omega} g^{(m)}_\varepsilon \,
\widetilde{u_\varepsilon} \, dx \Bigr) \Biggr)=
\\
=|Q_0| \int_{\Omega}f_0 v_0\,dx + \sum_{m=1}^2 |S^{(m)}|
\int_{\Omega} g^{(m)}_0  v_0\, dx =  
\int_{\Omega} \widehat{a}_{ij} \partial_{x_j}v_0  \partial_{x_i}v_0\,dx
+ \sum_{m=1}^2 q_m \int_{\Omega} \zeta_m(x) v_0\, dx.
\end{multline}

{\bf 6.} Now it remains to determine the last summand in
(\ref{3.13}). For this we will use the method of Browder and
Minty, a remarkable technique which somehow applies to the
corresponding inequality of monotonicity to justify passing to a
weak limit within a nonlinearity.

Thanks to (\ref{elliptic-cond}) and (\ref{cond-2}), the inequality
of monotonicity in our case reads as follows
\begin{multline}\label{mono-inequality}
\int_{\Omega_\varepsilon} a_{ij}^\varepsilon \,
\partial_{x_j}\bigl(u_\varepsilon - \varphi - \varepsilon T_p \,\partial_{x_p}\varphi \bigr)
 \, \partial_{x_i}\bigl(u_\varepsilon - \varphi - \varepsilon T_q\, \partial_{x_q}\varphi \bigr)
\,dx +
\\
+ \varepsilon \sum_{m=1}^2 \int_{\Xi^{(m)}_\varepsilon} \bigl(
\kappa_m(u_\varepsilon) - \kappa_m(\varphi)\bigr)
\bigl(u_\varepsilon - \varphi\bigr)\, ds_x \ge 0 \qquad \forall\,
\varphi\in C^\infty_0(\Omega),
\end{multline}
which is equivalent to
\begin{multline}\label{mono-inequality-1}
\int_{\Omega_\varepsilon} a_{ij}^\varepsilon \,
\partial_{x_j}u_\varepsilon\, \partial_{x_i}u_\varepsilon\,dx +
\varepsilon \sum_{m=1}^2 \int_{\Xi^{(m)}_\varepsilon}
\kappa_m(u_\varepsilon) \, u_\varepsilon\,ds_x +
\\
+ \int_{\Omega_\varepsilon} a_{ij}^\varepsilon \, \bigl(
\partial_{x_j}\varphi +  \partial_{\xi_j}T_p\,
\partial_{x_p}\varphi \bigr)  \, \bigl(\partial_{x_i}\varphi + 
\partial_{\xi_i}T_q\, \partial_{x_q}\varphi \bigr)\,dx -
 2\int_{\Omega} \widetilde{a_{ij}^\varepsilon \,
\partial_{x_j}u_\varepsilon} \, \partial_{x_i}\varphi\,dx -
\\
- 2\int_{\Omega_\varepsilon} a_{ij}^\varepsilon \,
\partial_{x_j}u_\varepsilon\, \partial_{\xi_i}T_q\, \partial_{x_q}\varphi \,dx -
 2 \varepsilon \int_{\Omega_\varepsilon} a_{ij}^\varepsilon \,
\bigl( \partial_{x_j}u_\varepsilon - \partial_{x_j}\varphi -
\partial_{\xi_j}T_p \,\partial_{x_p}\varphi \bigr)
\, T_q\, \partial^2_{x_i x_q}\varphi \,dx +  
\\
+ \varepsilon^2
\int_{\Omega_\varepsilon} a_{ij}^\varepsilon \, T_p\, T_q\,
\partial^2_{x_j x_p}\varphi\,\partial^2_{x_i x_q}\varphi\, dx -
\\
- \varepsilon \sum_{m=1}^2 \int_{\Xi^{(m)}_\varepsilon} \bigl(
\kappa_m(\varphi)\, u_\varepsilon +
\kappa_m(u_\varepsilon)\,\varphi - \kappa_m(\varphi)\,\varphi
\bigr)\, ds_x \ge 0 \quad \forall\, \varphi\in C^\infty_0(\Omega).
\end{multline}
The limit of the first line in~(\ref{mono-inequality-1}) is equal
to the right-hand side in (\ref{3.13}). The first integral in the
second line can be re-written in the form
\begin{equation}\label{expression-0}
\int_{\Omega_\varepsilon} \Bigl( a_{ij}(\xi) \,
\partial_{\xi_j}\bigl(\xi_p +  T_p \bigr)
\partial_{\xi_i}\bigl(\xi_q +  T_q
\bigr)\Bigr)|_{\xi=\frac{x}{\varepsilon}}
 \, \partial_{x_p}\varphi\, \partial_{x_q}\varphi\,dx.
\end{equation}
It follows from \cite{Zhikov-1} that its limit equals $
\int_{\Omega}\widehat{a}_{pq}\, \partial_{x_p}\varphi\,
\partial_{x_q}\varphi\,dx.$ Due to (\ref{3.3}) the integral in
third line vanishes. Obviously, the limits of summands in the
fourth line are equal to zero. The limits of the integrals in the
last line can be found with the help of
Lemma~\ref{convergence-lemma}. As a results we have
\begin{equation}\label{3.16}
\int_{\Omega} \widehat{a}_{ij}\,
\partial_{x_j}(v_0 - \varphi)\,  \partial_{x_i}(v_0 - \varphi) \,dx
+ \sum_{m=1}^2 q_m \int_{\Omega} \bigl(\zeta_m - |Q_0|
\kappa_m(\varphi)\bigr)\, \bigl( v_0 - \varphi \bigr)\, dx \ge 0.
\end{equation}
Evidently, this inequality holds for any function $\varphi\in
H^1_0(\Omega).$

Fix any $\psi\in C^\infty_0(\Omega)$ and set $\varphi:=v_0 -
\lambda \psi \ (\lambda>0)$ in (\ref{3.16}). We get then
\begin{equation*}
\lambda \int_{\Omega} \widehat{a}_{ij}\,
\partial_{x_j}\psi\,  \partial_{x_i}\psi \,dx
+ \sum_{m=1}^2 q_m \int_{\Omega} \bigl(\zeta_m - |Q_0|
\kappa_m(v_0 - \lambda \psi)\bigr)\, \psi\, dx \ge 0 \quad
\forall\, \psi\in C^\infty_0(\Omega).
\end{equation*}
In the limit (as $\lambda\to0)$ we obtain
\[
 \int_{\Omega} \sum_{m=1}^2 q_m \bigl(\zeta_m - |Q_0|
\kappa_m(v_0)\bigr)\, \psi\, dx \ge 0.
\]
Replacing $\psi$ by $-\psi,$ we deduce that in fact quality holds
above. Thus
\begin{equation}\label{zeta}
\sum_{m=1}^2 q_m \zeta_m(x) = \sum_{m=1}^2 |S^{(m)}|\,
\kappa_m(v_0(x)) \quad  \mbox{for a.e.} \ x\in \Omega.
\end{equation}

{\bf 7.} Returning to (\ref{new-inttot-1}), we see that the
function $v_0$ satisfies the following integral identity
\begin{equation}\label{3.17}
\int_{\Omega} \widehat{a}_{ij} \,
\partial_{x_j}v_0 \, \partial_{x_i} \varphi\,dx
+ \sum_{m=1}^2 |S^{(m)}| \int_{\Omega} \kappa_m(v_0) \, \varphi\,
dx = |Q_0| \int_{\Omega}  f_0 \varphi\, dx + \sum_{m=1}^2
|S^{(m)}| \int_{\Omega} g^{(m)}_0 \, \varphi\, dx
\end{equation}
for any function $\varphi\in H^1_0(\Omega).$ Hence $v_0$ is a weak
solution to the limit problem (\ref{limit-problem}). Thanks to
(\ref{cond-2}) this solution is unique.

Due to the uniqueness of the solution to problem
(\ref{limit-problem}), the above argumentations hold for any
subsequence of $\{\varepsilon\}$ chosen at the beginning of the
proof. By replacing (\ref{zeta}) in (\ref{3.13}), one obtains the
convergence of energies (\ref{energy}).
\end{proof}


\section{Asymptotic approximation to the solution\\ and the energy integral}


We take the following approximation
\begin{equation}\label{approx}
\overline{u}_\varepsilon := v_0(x) + \varepsilon T_k \left(
\tfrac{x}{\varepsilon} \right) \partial_{x_k} v_0(x)
\end{equation}
to the solution $u_\varepsilon.$ Substituting the difference
$u_\varepsilon - \overline{u}_\varepsilon,$ we find the residuals
both in the differential equation and boundary conditions.
Straightforward calculation show that
\begin{gather}\label{residual-1}
 -{\cal L}_\varepsilon
\left( u_\varepsilon - \overline{u}_\varepsilon\right) =
f_\varepsilon(x) - f_0(x) - \sum_{m=1}^2 q_m \,\bigl( g^{(m)}_0(x) -
 \kappa_m(v_0(x)) \bigr) + \notag
\\
+\left( a_{ij}(\xi) + a_{ik}(\xi) \partial_{\xi_k} T_j(\xi) -
\tfrac{1}{|Q_0|}\, \widehat{a}_{ij} \right)\left|_{\xi=\frac{x}{\varepsilon}}
\right. \,
\partial^2_{x_i x_j} v_0 +
\varepsilon \partial_{x_i} \left (F_i^\varepsilon(x) \right),
\quad x\in \Omega_\varepsilon;
\end{gather}
\begin{equation}\label{residual-2}
\sigma_\varepsilon\left ( u_\varepsilon - \overline{u}_\varepsilon
\right) = -\varepsilon \kappa_m(u_\varepsilon) + \varepsilon
g^{(m)}_\varepsilon (x) - F_i^\varepsilon(x)\, \nu_i, \quad x\in
\Xi^{(m)}_\varepsilon \ \ (m=1, 2),
\end{equation}
where\  $F_i^\varepsilon(x)= a_{ij}(\frac{x}{\varepsilon})
T_k(\frac{x}{\varepsilon}) \, \partial_{x_j x_k}^2 v_0(x),$ $i=1,\ldots,n,$ and
\begin{equation}\label{residual-3}
\left. \left ( u_\varepsilon - \overline{u}_\varepsilon \right)
\right|_{\Gamma_{\varepsilon}} =-\varepsilon T_k \left( \tfrac{x}{\varepsilon}\right) \partial_{x_k} v_0(x).
\end{equation}

Let $\varphi_\varepsilon$ be a smooth function in $\overline{\Omega}$ such that
$0 \leq \varphi_\varepsilon \leq 1,$
\ $\varphi_\varepsilon(x)=1$  \ if \ $dist(x,\partial \Omega)\leq \varepsilon,$
and $\varphi_\varepsilon(x)=0$  \ if \ $dist(x,\partial \Omega)\geq 2\varepsilon.$
Obviously,
\begin{equation}\label{est-grad}
|\nabla_x \varphi_\varepsilon|\leq c\, \varepsilon^{-1} \quad \text{in} \ \overline{\Omega}.
\end{equation}
With the help of $\varphi_\varepsilon$ we define the following functions
\[
\psi_\varepsilon (x) = - \varepsilon \varphi_{\varepsilon}(x)\, T_k\left(\tfrac{x}{\varepsilon}\right)
\, \partial_{x_k} v_0(x) \quad \text{and}\quad
w_\varepsilon(x) = u_\varepsilon(x) - \overline{u}_\varepsilon(x) - \psi_\varepsilon (x), \quad
x\in \overline{\Omega}_\varepsilon.
\]
It is easy to verify that ${supp}\,(\psi_\varepsilon)\subset
{\cal U}_{2\varepsilon} = \{x\in \overline{\Omega}_\varepsilon: \ \
\text{dist}(x,\partial \Omega)\leq 2\varepsilon \}$
and  $w_\varepsilon$ is a solution to the following problem
\begin{equation*}
\left\{
   \begin{array}{l}
- {\cal L}_\varepsilon(w_\varepsilon) =
f_\varepsilon - f_0 - \sum_{m=1}^2 q_m \, \bigl( g^{(m)}_0(x) -
 \kappa_m(v_0)\bigr) + \varepsilon \partial_{x_i} \left (F_i^\varepsilon(x) \right)
+ {\cal L}_\varepsilon(\psi_\varepsilon) +
\\[2mm]
+\left( a_{ij}(\xi) + a_{ik}(\xi) \partial_{\xi_k} T_j(\xi) -
|Q_0|^{-1} \hat{a}_{ij} \right)\left|_{\xi=\frac{x}{\varepsilon}}
\right. \,
\partial^2_{x_i x_j} v_0
\qquad \text{in} \ \ \Omega_\varepsilon;
\\[2mm]
\sigma_\varepsilon(w_\varepsilon)=
-\varepsilon \kappa_m(u_\varepsilon) + \varepsilon
g^{(m)}_\varepsilon (x) - F_i^\varepsilon(x)\, \nu_i
 - \sigma_\varepsilon(\psi_\varepsilon) \qquad \text{on} \ \
\Xi^{(m)}_\varepsilon \ \ (m=1, 2);
\\[2mm]
w_\varepsilon=0 \qquad \textrm{on} \ \ \Gamma_\varepsilon.
\end{array}
\right.
\end{equation*}

Multiplying the equation of this problem by $w_\varepsilon,$ then integrating by parts
and subtracting identities (\ref{identities}) for $\varphi_m=\kappa_m(v_0)\, w_\varepsilon, \ m=1, 2,$
we get
\begin{multline}\label{int-residual}
\varepsilon \sum_{m=1}^2\int_{\Xi^{(m)}_\varepsilon} \bigl(
\kappa_m(u_\varepsilon) - \kappa_m(v_0)
\bigr)\, w_\varepsilon  \, ds_x +
\int_{\Omega_\varepsilon} a^\varepsilon_{ij}\,  \partial_{x_j} w_\varepsilon
 \, \partial_{x_i}w_\varepsilon\, dx =
\\
=\int_{\Omega_\varepsilon} (f_\varepsilon-f_0) \varphi\, dx
+
\sum_{m=1}^2\Bigl( \varepsilon \int_{\Xi^{(m)}_\varepsilon}g^{(m)}_\varepsilon \, w_\varepsilon
 \, ds_x - q_m \int_{\Omega_\varepsilon}  g^{(m)}_0 \, w_\varepsilon\, dx\Bigr) -
\\
- \varepsilon \sum_{m=1}^2
 \int_{\Omega_\varepsilon} a^\varepsilon_{ij}(x)\,
     \partial_{\xi_j} \psi_0|_{\xi=\frac{x}{\varepsilon}}
\partial_{x_i} \left ( \kappa_m(v_0) \, w_\varepsilon \right)\, dx +
\\
+ \int_{\Omega_\varepsilon}
\left( a_{ij}(\xi) + a_{ik}(\xi) \partial_{\xi_k} T_j(\xi) -
|Q_0|^{-1} \widehat{a}_{ij} \right)\left|_{\xi=\frac{x}{\varepsilon}}
\right. \, \partial^2_{x_i x_j} v_0\, w_\varepsilon\, dx +
\\
+
\varepsilon \int_{\Omega_\varepsilon}
F_i^\varepsilon \partial_{x_i}w_\varepsilon\, dx -
\int_{\Omega_\varepsilon} a^\varepsilon_{ij}\,  \partial_{x_j} \psi_\varepsilon
 \, \partial_{x_i}w_\varepsilon\, dx.
\end{multline}

Due to (\ref{elliptic-cond}), (\ref{cond-2}) and (\ref{equiv-norms}) the left-hand side of
(\ref{int-residual})
is estimated by the following way
\begin{multline}\label{est-bellow}
\int_{\Omega_\varepsilon} a_{ij}^\varepsilon\,
\partial_{x_j} w_\varepsilon \, \partial_{x_i} w_\varepsilon
\, dx + \varepsilon \sum_{m=1}^2\int_{\Xi^{(m)}_\varepsilon}
\left ( \kappa_m (u_\varepsilon)-\kappa_m(v_0)\right)
w_\varepsilon \, ds_x \ge
\\
\ge c_1 \Bigl( \int_{\Omega_\varepsilon} |\nabla w_\varepsilon|^2\, dx
+ \varepsilon \int_{\Xi_\varepsilon} w^2_\varepsilon \, ds_x\Bigr)
- c_2 \varepsilon \int_{\Xi_\varepsilon}
\bigl| \bigr(\varepsilon T_k\, \partial_{x_k}v_0 + \psi_\varepsilon\bigl) w_\varepsilon \bigr| \,ds_x \ge
\\
\ge c_3 \|w_\varepsilon\|^2_{H^1(\Omega_\varepsilon)} -
c_2 \varepsilon \int_{\, \Xi_\varepsilon}
\bigl| \bigl( \varepsilon T_k\, \partial_{x_k}v_0 + \psi_\varepsilon\bigl) w_\varepsilon \bigr|\,ds_x.
\end{multline}

Now estimate the summands in the right-hand side of (\ref{int-residual}). Evidently,
$|\int_{\Omega_\varepsilon} \left ( f_\varepsilon -f_0\right) w_\varepsilon\, dx| \le
\|f_\varepsilon-f_0\|_{L^2(\Omega_\varepsilon)}\|w_\varepsilon \|_{H^1(\Omega_\varepsilon)}.$
With the help of (\ref{identities}), (\ref{cond-1}) and (\ref{est-0})
we bound the second and third terms:
$$
\left |
\sum_{m=1}^2\Bigl( \varepsilon \int_{\Xi^{(m)}_\varepsilon}g^{(m)}_\varepsilon \, w_\varepsilon
 \, ds_x - q_m \int_{\Omega_\varepsilon}  g^{(m)}_0 \, w_\varepsilon\, dx\Bigr)
\right|
=
\sum_{m=1}^2\Bigl( \varepsilon
\Bigl|
     \int_{\Omega_\varepsilon} a_{ij}^\varepsilon
     \left.\partial_{\xi_j} \psi_0(\xi)
     \right|_{\xi=\frac{x}{\varepsilon}}
     \partial_{x_i} (g^{(m)}_\varepsilon w_\varepsilon)\, dx
\Bigr|+
$$
$$
+ q_m
\Bigl|
     \int_{\Omega_\varepsilon} g^{(m)}_\varepsilon w_\varepsilon\, dx
     -\int_{\Omega_\varepsilon} g^{(m)}_0 w_\varepsilon\, dx
\Bigr| \Bigr)
=c_1 \varepsilon \|w_\varepsilon \|_{H^1(\Omega_\varepsilon)}
+c_2 \sum_{m=1}^2\|g^{(m)}_\varepsilon - g^{(m)}_0 \|_{L^2(\Omega_\varepsilon)}
\|w_\varepsilon \|_{H^1(\Omega_\varepsilon)};
$$
$$
\varepsilon \sum_{m=1}^2 \left| \int_{\Omega_\varepsilon}
     a_{ij}^\varepsilon(x)
     \left. \partial_{\xi_j} \psi_0
     \right|_{\xi=\frac{x}{\varepsilon}}
     \partial_{x_i} \left ( \kappa_m(v_0) w_\varepsilon \right)\, dx
\right| \le
$$
$$
\le \varepsilon
c_3 \int_{\Omega_\varepsilon} |\nabla v_0|\, |w_\varepsilon| \, dx
+ \varepsilon c_4 \sum_{m=1}^2 \int_{\Omega_\varepsilon}
|\kappa_m (v_0)| \, |\nabla w_\varepsilon| \, dx
\le \varepsilon c_5 \, \|w_\varepsilon \|_{H^1(\Omega_\varepsilon)}.
$$

Thank to (\ref{hom-matrix}) and the fact that the vector-functions (\ref{solenoidal})
are weak solenoidal in $\square,$ it follows from Lemma 16.4 (\cite{ChechPiatSham})
that
$$
\left|\int_{\Omega_\varepsilon}
\left( a_{ij}(\xi) + a_{ik}(\xi) \partial_{\xi_k} T_j(\xi) - \tfrac{1}{|Q_0|} \,\widehat{a}_{ij} \right)\left|_{\xi=\frac{x}{\varepsilon}}
\right. \, \partial^2_{x_i x_j} v_0\, w_\varepsilon\, dx \right| \le
\varepsilon c_6 \, \|w_\varepsilon \|_{H^1(\Omega_\varepsilon)}.
$$

It is easy to see that $\varepsilon |\int_{\Omega_\varepsilon}
F_i^\varepsilon \partial_{x_i}w_\varepsilon\, dx| \le
\varepsilon c_6 \, \|w_\varepsilon \|_{H^1(\Omega_\varepsilon)}.$
The last summand in (\ref{int-residual}) is estimated with the help of Lemma 1.5 (\cite{OYS})
and (\ref{est-grad}):
$$
\varepsilon \left|\int_{\Omega_\varepsilon}
a^\varepsilon_{ij}\,  \partial_{x_j} \psi_\varepsilon
 \, \partial_{x_i}w_\varepsilon\, dx\right|
= \varepsilon \left|\int_{{\cal U}_{2\varepsilon}}
a^\varepsilon_{ij}\,  \partial_{x_j} \psi_\varepsilon
 \, \partial_{x_i}w_\varepsilon\, dx\right| \le
$$
$$
\le c_7 \int_{{\cal U}_{2\varepsilon}} |\nabla v_0|\, |\nabla w_\varepsilon|\, dx
+ \varepsilon^2 c_8 \int_{{\cal U}_{2\varepsilon}} |D^2 v_0|\, |\nabla w_\varepsilon|\, dx
\le
$$
\begin{equation}\label{last-1}
\le c_7 \|v_0\|_{H^1({\cal U}_{2\varepsilon})} \|w_\varepsilon\|_{H^1({\cal U}_{2\varepsilon})}
+ \varepsilon^2 c_8 \|v_0\|_{H^2(\Omega)} \|w_\varepsilon\|_{H^1(\Omega_\varepsilon)} \le
c_9 \varepsilon^{\frac{1}{2}} \|v_0\|_{H^2(\Omega)} \|w_\varepsilon\|_{H^1(\Omega_\varepsilon)}.
\end{equation}

It is remain to bound the last term in (\ref{est-bellow}). Thanks to (\ref{identities}) and (\ref{est-0})
we have
\[
\varepsilon^2  \int_{\Xi_\varepsilon}
 | T_k\, \partial_{x_k}v_0 \, w_\varepsilon | \,ds_x \le
2 \varepsilon^2
\int_{\Omega_\varepsilon} \Bigl| a_{ij}^\varepsilon
     \left.\partial_{\xi_j} \psi_0(\xi)
     \right|_{\xi=\frac{x}{\varepsilon}}
     \partial_{x_i} \bigl(T_k\, \partial_{x_k}v_0 \, w_\varepsilon\bigr)\Bigr| \, dx
+
\]
\begin{equation}\label{last-2}
+ c_{10} \, \varepsilon  \int_{\Omega_\varepsilon}\Bigl|
T_k\, \partial_{x_k}v_0 \, w_\varepsilon\Bigr| \, dx  \le
c_{11} \varepsilon \|v_0\|_{H^2(\Omega)} \|w_\varepsilon\|_{H^1(\Omega_\varepsilon)}.
\end{equation}
With the same arguments as in (\ref{last-1}) and (\ref{last-2}) we have
\[
\varepsilon \int_{\Xi_\varepsilon}| \psi_\varepsilon \, w_\varepsilon | \,ds_x \le
c_{12} \varepsilon^{\frac{1}{2}} \|v_0\|_{H^2(\Omega)} \|w_\varepsilon\|_{H^1(\Omega_\varepsilon)}.
\]

Finally, we conclude from (\ref{int-residual}), (\ref{est-bellow}) and estimates obtained above
that
\begin{equation}\label{final-1}
\|w_\varepsilon\|_{H^1(\Omega_\varepsilon)} \le
C_1
\Bigl(
     \varepsilon^{\frac{1}{2}}
     +\|f_\varepsilon-f_0\|_{L^2(\Omega_\varepsilon)}
     + \sum_{m=1}^2\|g^{(m)}_\varepsilon - g^{(m)}_0 \|_{L^2(\Omega_\varepsilon)}
\Bigr).
\end{equation}
Since $\|\psi_\varepsilon\|_{H^1(\Omega_\varepsilon)}$ is bounded above by
$C_2 \varepsilon^{\frac{1}{2}},$ we have from (\ref{final-1}) that
\begin{equation}\label{final-estimate}
\|u_\varepsilon - \overline{u}_\varepsilon\|_{H^1(\Omega_\varepsilon)} \le C_3
\Bigl(
\varepsilon^{\frac{1}{2}}
     +\|f_\varepsilon-f_0\|_{L^2(\Omega_\varepsilon)}
     + \sum_{m=1}^2\|g^{(m)}_\varepsilon - g^{(m)}_0 \|_{L^2(\Omega_\varepsilon)}
\Bigr),
\end{equation}
where the constant $C_3$ is independent of $\varepsilon.$

Thus, we have proved the following result.

\begin{theorem}
Between the solution $u_\varepsilon$ to problem  (\ref{bvpipd}) and the approximation function
(\ref{approx}) the estimate (\ref{final-estimate}) holds.
\end{theorem}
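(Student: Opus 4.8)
The plan is to control $u_\varepsilon-\overline{u}_\varepsilon$ in the energy norm by a corrector argument, the one genuine complication being that the first-order corrector $\varepsilon T_k(\tfrac{x}{\varepsilon})\,\partial_{x_k}v_0$ in (\ref{approx}) does not respect the Dirichlet condition on the outer boundary $\Gamma_\varepsilon$. Assuming the elliptic regularity $v_0\in H^2(\Omega)$ for the homogenized problem (\ref{limit-problem}), I would first substitute $\overline{u}_\varepsilon$ into the operator and the boundary operators and read off the residuals (\ref{residual-1})--(\ref{residual-3}): the bulk residual splits into the source mismatch $f_\varepsilon-f_0-\sum_m q_m(g^{(m)}_0-\kappa_m(v_0))$, the oscillating term carrying $a_{ij}+a_{ik}\partial_{\xi_k}T_j-|Q_0|^{-1}\widehat a_{ij}$, and a divergence term $\varepsilon\,\partial_{x_i}F_i^\varepsilon$, while the boundary residual on each $\Xi^{(m)}_\varepsilon$ reproduces $-\varepsilon\kappa_m(u_\varepsilon)+\varepsilon g^{(m)}_\varepsilon$ up to the flux $F_i^\varepsilon\nu_i$. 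To repair the nonzero trace (\ref{residual-3}) I would subtract the cutoff corrector $\psi_\varepsilon$ supported in the collar $\mathcal U_{2\varepsilon}$ of $\partial\Omega$, and work with $w_\varepsilon=u_\varepsilon-\overline{u}_\varepsilon-\psi_\varepsilon$, which vanishes on $\Gamma_\varepsilon$ and hence is an admissible element of $H^1(\Omega_\varepsilon,\Gamma_\varepsilon)$.

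Next I would test the equation satisfied by $w_\varepsilon$ against $w_\varepsilon$ itself and subtract the auxiliary identities (\ref{identities}) taken with $\varphi_m=\kappa_m(v_0)\,w_\varepsilon$, producing the integral identity (\ref{int-residual}). Its left-hand side is precisely the quadratic form $\int_{\Omega_\varepsilon}a^\varepsilon_{ij}\partial_{x_j}w_\varepsilon\,\partial_{x_i}w_\varepsilon\,dx$ together with the boundary term $\varepsilon\sum_m\int_{\Xi^{(m)}_\varepsilon}(\kappa_m(u_\varepsilon)-\kappa_m(v_0))\,w_\varepsilon\,ds_x$. Using ellipticity (\ref{elliptic-cond}), the one-sided bound (\ref{cond-2}) on $\kappa_m$, and the norm equivalence (\ref{equiv-norms}), I would bound this form from below by $c_3\|w_\varepsilon\|^2_{H^1(\Omega_\varepsilon)}$ minus the boundary remainder $c_2\varepsilon\int_{\Xi_\varepsilon}|(\varepsilon T_k\partial_{x_k}v_0+\psi_\varepsilon)w_\varepsilon|\,ds_x$, which is exactly (\ref{est-bellow}); the coercivity of the full left-hand side is thus bought from the monotonicity of the nonlinearity plus the uniform trace control (\ref{est-1}).

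The heart of the argument is then to show each term on the right of (\ref{int-residual}) is at most $\big(\varepsilon^{1/2}+\|f_\varepsilon-f_0\|_{L^2(\Omega_\varepsilon)}+\sum_m\|g^{(m)}_\varepsilon-g^{(m)}_0\|_{L^2(\Omega_\varepsilon)}\big)$ times $\|w_\varepsilon\|_{H^1(\Omega_\varepsilon)}$. The source and $g$-terms I would treat directly, converting the surface integrals $\varepsilon\int_{\Xi^{(m)}_\varepsilon}g^{(m)}_\varepsilon w_\varepsilon$ into volume integrals via (\ref{identities}) and bounding $\nabla_\xi\psi^{(m)}_0$ by (\ref{est-0}). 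The delicate oscillating-coefficient term, built from $a_{ij}+a_{ik}\partial_{\xi_k}T_j-|Q_0|^{-1}\widehat a_{ij}$, I would handle using the weak solenoidality of the fluxes (\ref{solenoidal}) together with the compensated-compactness estimate of Lemma~16.4 in \cite{ChechPiatSham}, which supplies the $\varepsilon$-gain. The cutoff contributions are small because $\mathrm{supp}\,\psi_\varepsilon\subset\mathcal U_{2\varepsilon}$ has measure $O(\varepsilon)$: combining $|\nabla\varphi_\varepsilon|\le c\varepsilon^{-1}$ from (\ref{est-grad}) with the collar bound $\|v_0\|_{H^1(\mathcal U_{2\varepsilon})}\le C\varepsilon^{1/2}\|v_0\|_{H^2(\Omega)}$ gives the $\varepsilon^{1/2}$ rate in (\ref{last-1}), while the surface remainders left over in (\ref{est-bellow}) are absorbed through (\ref{identities}) and (\ref{est-0}) as in (\ref{last-2}).

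I expect the \emph{boundary layer}, rather than the interior homogenization, to be the main obstacle: the corrector violates the Dirichlet condition on $\Gamma_\varepsilon$, and it is the cutoff $\psi_\varepsilon$ that limits the rate to $\varepsilon^{1/2}$, since its energy is concentrated in a collar of width $2\varepsilon$ where $|\nabla\psi_\varepsilon|$ is $O(1)$ but the volume is only $O(\varepsilon)$. Once every term is bounded as above, dividing (\ref{int-residual}) through by $\|w_\varepsilon\|_{H^1(\Omega_\varepsilon)}$ yields (\ref{final-1}); adding back $\|\psi_\varepsilon\|_{H^1(\Omega_\varepsilon)}\le C_2\varepsilon^{1/2}$ by the triangle inequality then produces (\ref{final-estimate}), which is the assertion of the theorem.
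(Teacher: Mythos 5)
Your proposal is correct and follows essentially the same route as the paper's own proof: the same corrector ansatz with residuals (\ref{residual-1})--(\ref{residual-3}), the same boundary-layer cutoff $\psi_\varepsilon$ and remainder $w_\varepsilon$, the same energy identity (\ref{int-residual}) obtained by testing with $w_\varepsilon$ and subtracting (\ref{identities}), the same lower bound (\ref{est-bellow}) from ellipticity and the monotonicity of $\kappa_m$, the same use of Lemma~16.4 of \cite{ChechPiatSham} for the oscillating-coefficient term, and the same concluding triangle inequality with $\|\psi_\varepsilon\|_{H^1(\Omega_\varepsilon)}\le C_2\varepsilon^{1/2}$. Your diagnosis that the boundary layer near $\Gamma_\varepsilon$, not the interior homogenization, is what limits the rate to $\varepsilon^{1/2}$ matches the structure of the paper's estimates (\ref{last-1}) and (\ref{final-1}).
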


With the help of the approximation function (\ref{approx}) and estimate (\ref{final-estimate})
we can obtain an estimate for the energy integrals.

\begin{corollary}
The following estimate
\begin{equation}\label{energy-estimate}
\bigl| E_\varepsilon(u_\varepsilon) - E_0(v_0)\bigr| \le
C \Bigl(\varepsilon^{\frac{1}{2}} +  \|f_\varepsilon-f_0 \|_{L^2 (\Omega_\varepsilon)}
            +\sum_{m=1}^2  \|g_\varepsilon^{(m)}-g_{0}^{(m)}\|_{L^2 (\Omega_\varepsilon)} \Bigr).
\end{equation}
is satisfied, where  the energy integrals  $E_\varepsilon(u_\varepsilon)$ and $E_0(v_0)$ are defined
in (\ref{energy}).
\end{corollary}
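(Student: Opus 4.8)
The plan is to insert the explicit first--order approximation $\overline{u}_\varepsilon$ from (\ref{approx}) as an intermediate term and to estimate $\bigl|E_\varepsilon(u_\varepsilon)-E_0(v_0)\bigr|\le \bigl|E_\varepsilon(u_\varepsilon)-E_\varepsilon(\overline{u}_\varepsilon)\bigr|+\bigl|E_\varepsilon(\overline{u}_\varepsilon)-E_0(v_0)\bigr|$, where $E_\varepsilon(\cdot)$ is the bulk--plus--boundary form in (\ref{energy}) evaluated at an arbitrary element of $H^1(\Omega_\varepsilon)$, and throughout I abbreviate $R_\varepsilon:=\varepsilon^{1/2}+\|f_\varepsilon-f_0\|_{L^2(\Omega_\varepsilon)}+\sum_m\|g^{(m)}_\varepsilon-g^{(m)}_0\|_{L^2(\Omega_\varepsilon)}$. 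The decisive structural point is to keep $E_0(v_0)$ in its intrinsic form (\ref{energy}) and never in the data representation $|Q_0|(f_0,v_0)+\sum_m|S^{(m)}|(g^{(m)}_0,v_0)$: a direct comparison of data pairings would force control of $\int_\Omega\bigl(\chi^\varepsilon_{Q_0}-|Q_0|\bigr)f_0 v_0\,dx$, and since $f_0$ is merely in $L^2(\Omega)$ this oscillatory integral carries no rate. In the decomposition below the datum $f_\varepsilon$ enters only through the already proved bound (\ref{final-estimate}), in the admissible combination $\|f_\varepsilon-f_0\|_{L^2(\Omega_\varepsilon)}$.

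For the first term I would use bilinearity of the principal part, writing the difference of bulk energies as $\int_{\Omega_\varepsilon}a^\varepsilon_{ij}\partial_{x_j}(u_\varepsilon-\overline{u}_\varepsilon)\partial_{x_i}(u_\varepsilon+\overline{u}_\varepsilon)\,dx$, which by (\ref{elliptic-cond}) and Cauchy--Schwarz is bounded by $\varkappa_2\|u_\varepsilon-\overline{u}_\varepsilon\|_{H^1(\Omega_\varepsilon)}\bigl(\|u_\varepsilon\|_{H^1(\Omega_\varepsilon)}+\|\overline{u}_\varepsilon\|_{H^1(\Omega_\varepsilon)}\bigr)$; the first factor is $O(R_\varepsilon)$ by (\ref{final-estimate}) and the second is uniformly bounded by (\ref{uniform estimate}), the boundedness of $T_l,\nabla_\xi T_l$, and $v_0\in H^2(\Omega)$. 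For the boundary part I would write $\kappa_m(u_\varepsilon)u_\varepsilon-\kappa_m(\overline{u}_\varepsilon)\overline{u}_\varepsilon=\kappa_m(u_\varepsilon)(u_\varepsilon-\overline{u}_\varepsilon)+\overline{u}_\varepsilon\bigl(\kappa_m(u_\varepsilon)-\kappa_m(\overline{u}_\varepsilon)\bigr)$ and bound it by $\bigl(|\kappa_m(u_\varepsilon)|+c_2|\overline{u}_\varepsilon|\bigr)|u_\varepsilon-\overline{u}_\varepsilon|$ via (\ref{cond-2}); a Cauchy--Schwarz in the measure $\varepsilon\,ds_x$ splits this into a factor $\bigl(\varepsilon\int_{\Xi^{(m)}_\varepsilon}(|\kappa_m(u_\varepsilon)|+c_2|\overline{u}_\varepsilon|)^2\,ds_x\bigr)^{1/2}$, which is $O(1)$ by the linear growth of $\kappa_m$, the trace estimate (\ref{est-1}) and the general trace inequality (Lemma~1.5 of \cite{OYS}) applied to $\overline{u}_\varepsilon$, times $\bigl(\varepsilon\int_{\Xi_\varepsilon}|u_\varepsilon-\overline{u}_\varepsilon|^2\,ds_x\bigr)^{1/2}$. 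The latter I would control by splitting $u_\varepsilon-\overline{u}_\varepsilon=w_\varepsilon+\psi_\varepsilon$: for $w_\varepsilon\in H^1(\Omega_\varepsilon,\Gamma_\varepsilon)$, inequality (\ref{est-1}) and (\ref{final-1}) give $O(R_\varepsilon)$, while for the boundary--layer term the trace inequality together with $\|\psi_\varepsilon\|_{L^2(\Omega_\varepsilon)}=O(\varepsilon)$ and $\|\nabla\psi_\varepsilon\|_{L^2(\Omega_\varepsilon)}=O(\varepsilon^{1/2})$ yields $O(\varepsilon)$. Hence $\bigl|E_\varepsilon(u_\varepsilon)-E_\varepsilon(\overline{u}_\varepsilon)\bigr|\le C R_\varepsilon$.

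For the second term I would treat bulk and boundary separately. Inserting $\partial_{x_j}\overline{u}_\varepsilon=\partial_{\xi_j}(\xi_k+T_k)|_{\xi=x/\varepsilon}\,\partial_{x_k}v_0+\varepsilon T_k(x/\varepsilon)\partial^2_{x_jx_k}v_0$ into the bulk energy of $\overline{u}_\varepsilon$, the corrector--derivative terms are $O(\varepsilon)$ in $L^1$ since $\|\varepsilon T_k\partial^2 v_0\|_{L^2(\Omega_\varepsilon)}\le C\varepsilon\|v_0\|_{H^2(\Omega)}$, while the leading contribution is $\int_{\Omega_\varepsilon}\bigl(a_{ij}\partial_{\xi_i}(\xi_k+T_k)\partial_{\xi_j}(\xi_l+T_l)\bigr)|_{\xi=x/\varepsilon}\,\partial_{x_k}v_0\,\partial_{x_l}v_0\,dx$; by (\ref{hom-matrix-1}) its periodic coefficient has cell--average $\widehat{a}_{kl}$, so the homogenization--rate estimate already used in deriving (\ref{int-residual})--(\ref{final-1}) (cf.\ \cite{Zhikov-1} and Lemma~16.4 of \cite{ChechPiatSham}), applied to the density $\partial_{x_k}v_0\,\partial_{x_l}v_0\in W^{1,1}(\Omega)$, produces $\int_\Omega\widehat{a}_{kl}\partial_{x_k}v_0\partial_{x_l}v_0\,dx$ up to $O(\varepsilon^{1/2})$. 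For the boundary part I would first replace $\overline{u}_\varepsilon$ by $v_0$ on $\Xi^{(m)}_\varepsilon$, the error being $O(\varepsilon)$ via $\overline{u}_\varepsilon-v_0=\varepsilon T_k\partial_{x_k}v_0$, (\ref{cond-2}) and the trace inequality for $v_0,\nabla v_0\in H^1(\Omega_\varepsilon)$; it then remains to pass from $\varepsilon\int_{\Xi^{(m)}_\varepsilon}\kappa_m(v_0)v_0\,ds_x$ to $|S^{(m)}|\int_\Omega\kappa_m(v_0)v_0\,dx$. Since $v_0=0$ on $\partial\Omega$, the density $h_m:=\kappa_m(v_0)v_0$ lies in $H^1_0(\Omega)$, so identity (\ref{identities}) with $\varphi=h_m$ converts the surface integral into the volume term $q_m\int_{\Omega_\varepsilon}h_m\,dx$ plus an $O(\varepsilon)$ gradient term, and $q_m\int_\Omega(\chi^\varepsilon_{Q_0}-|Q_0|)h_m\,dx=O(\varepsilon)$ precisely because $h_m\in H^1_0(\Omega)$. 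Collecting these bounds gives $\bigl|E_\varepsilon(\overline{u}_\varepsilon)-E_0(v_0)\bigr|\le C\varepsilon^{1/2}$, and combining with the first term yields (\ref{energy-estimate}), since $\varepsilon^{1/2}\le R_\varepsilon$.

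The main obstacle is the surface bookkeeping on $\Xi_\varepsilon$: one must use trace inequalities valid for functions that need not vanish on $\Gamma_\varepsilon$ (for $\overline{u}_\varepsilon$, $v_0$ and $\nabla v_0$) and must repeatedly exploit the identities (\ref{identities}) to trade $\varepsilon$--weighted surface averages for volume integrals with a controlled rate. The structural subtlety, and the real reason the corrector $\overline{u}_\varepsilon$ is introduced here, is that only through this decomposition does the $L^2$--datum $f_0$ remain confined to the error estimate (\ref{final-estimate}); any attempt to reduce $E_\varepsilon(u_\varepsilon)$ and $E_0(v_0)$ to their data representations would run into the uncontrollable oscillation of $\chi^\varepsilon_{Q_0}$ against $f_0 v_0$ and would forfeit the rate.
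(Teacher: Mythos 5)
Your proposal is correct and follows essentially the same route as the paper: reduce everything to the corrector approximation $\overline{u}_\varepsilon$ via (\ref{final-estimate}), handle the bulk part through the zero-mean periodic oscillation of $a_{ij}\,\partial_{\xi_i}(\xi_k+T_k)\,\partial_{\xi_j}(\xi_l+T_l)$ around $\widehat{a}_{kl}$ with a rate lemma, and handle the surface part by trading $\varepsilon$-weighted integrals over $\Xi^{(m)}_\varepsilon$ for volume integrals via the identities (\ref{identities}) plus the zero-average oscillation of $\chi^\varepsilon_{Q_0}$. The only differences are organizational — you pass through $E_\varepsilon(\overline{u}_\varepsilon)$ by a triangle inequality and apply (\ref{identities}) to $\kappa_m(v_0)v_0$, whereas the paper substitutes $\nabla u_\varepsilon = \nabla v_0 + \nabla_\xi T_k\,\partial_{x_k}v_0 + r^\varepsilon$ directly and applies (\ref{identities}) to $\kappa_m(u_\varepsilon)u_\varepsilon$ — and these amount to the same estimates in a different order.
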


\begin{proof}
By virtue of (\ref{final-estimate}) we have
$$
 \partial_{x_{i}} u_{\varepsilon} =\partial_{x_{i}} v_{0}
 +\left. \partial_{\xi_{i}} T_{k} (\xi)\right|_{\xi=\frac{x}{\varepsilon}}
 \partial_{x_{k}} v_{0}  + r_{i}^{\varepsilon} (x),
$$
where
$$
 \|r_{i}^{\varepsilon} \|_{L^2 (\Omega_\varepsilon)}
 \le
C_1 \Bigl(
      \varepsilon^{\frac{1}{2}}
      +\|f_\varepsilon-f_0 \|_{L^2 (\Omega_\varepsilon)}
      +\sum_{m=1}^2
      \|g_\varepsilon^{(m)}-g_{0}^{(m)}\|_{L^2 (\Omega_\varepsilon)}
 \Bigr).
$$

Then
\begin{equation}\label{expression-1}
\int\limits_{\Omega_{\varepsilon}} a_{ij}^{\varepsilon}
\partial_{x_j} u_\varepsilon \, \partial_{x_i} u_\varepsilon \,dx=
\int\limits_{\Omega_{\varepsilon}}
 a_{ij}^{\varepsilon} \bigl(\partial_{x_{i}} v_{0} +\left. \partial_{\xi_{i}} T_{k} (\xi)
      \right|_{\xi=\frac{x}{\varepsilon}}  \partial_{x_{k}} v_{0} \bigr)
 \bigl( \partial_{x_{j}} v_{0} + \left. \partial_{\xi_{j}} T_{l} (\xi)
       \right|_{\xi=\frac{x}{\varepsilon}} \partial_{ x_{l}} v_{0} \bigr) dx + p_\varepsilon,
\end{equation}
where
$$
  p_\varepsilon = 2 \int_{\Omega_{\varepsilon}}
  a_{ij}^{\varepsilon} \left(\partial_{x_{i}} v_{0} +\left.
            \partial_{\xi_{i}} T_{k} (\xi) \right|_{\xi=\frac{x}{\varepsilon}}
      \partial_{x_{k}} v_{0} \right ) r_{\varepsilon}^{j} \, dx
  +\int_{\Omega_{\varepsilon}} a_{ij}^{\varepsilon} r_{\varepsilon}^{i}
  r_{\varepsilon}^{j}\, dx.
$$
Taking into account the boundedness of $a_{ij}^{\varepsilon}$ and
$\partial_{\xi_{j}} T_{l} (\xi)$ and estimate (\ref{final-estimate}), we get
$$
  |p_{\varepsilon}| \le
  c_{1} \left ( \|v_{0}\|_{H^1 (\Omega)}
       \left (\int_{\Omega_\varepsilon} r_{\varepsilon}^{i}
r_{\varepsilon}^{i} \, dx \right )^{\frac{1}{2}}
       +\int_{\Omega_\varepsilon}r_{\varepsilon}^{i} r_{\varepsilon}^{i} \, dx
  \right )
  \le
$$
\begin{equation}\label{est-6}
  \le c_2
  \left (\varepsilon^{\frac{1}{2}} + \|f_\varepsilon-f_0 \|_{L^2 (\Omega_\varepsilon)}
            +\sum_{m=1}^2  \|g_\varepsilon^{(m)}-g_{0}^{(m)}\|_{L^2 (\Omega_\varepsilon)}
       \right ).
\end{equation}
Due to (\ref{cond-1}) we can regard here that $\|f_\varepsilon-f_0 \|^2_{L^2 (\Omega_\varepsilon)}
\le \|f_\varepsilon-f_0 \|_{L^2 (\Omega_\varepsilon)},$ similar for other summands.

Let us introduce the following functions
$$
  H_{kl} (\xi)\equiv  a_{ij}(\xi)\,
  \partial_{\xi_{i}} \left(T_k (\xi)+\xi_{k}\right)
  \partial_{\xi_{j}} \left(T_l (\xi)+\xi_{l}\right ) \,-\, \tfrac{1}{|Q_0|} \, \widehat{a}_{kl}, \quad
k, l =1,\ldots,n.
$$
After extending the functions
$a_{ij}, \ T_{k} , \  \partial_{\xi_{i}} T_{k}, \ k=1,\ldots,n,$
by zero to $\Box\backslash Q_{0},$ the functions $H_{kl}, \  k, l =1,\ldots,n,$ will be 1-periodic with zero average over $\Box.$

By the same way as we rewrote a summand in~(\ref{mono-inequality-1}) (see (\ref{expression-0})) and using the functions $H_{kl}, \  k, l =1,\ldots,n,$ and (\ref{expression-1}), we obtain
\begin{multline}\label{expression-2}
\int_{\Omega_{\varepsilon}} a_{ij}^{\varepsilon}
\partial_{x_j} u_\varepsilon \, \partial_{x_i} u_\varepsilon \,dx -
\int_{\Omega} \widehat{a}_{ij}\partial_{x_j} v_0 \, \partial_{x_i} v_0\,dx=
\int_{\Omega} \left. H_{kl}(\xi) \right|_{\xi=\frac{x}{\varepsilon}}
  \partial_{x_{k}} v_{0} \, \partial_{x_{l}} v_{0} \, dx +
\\
+ \int_\Omega \bigl( \tfrac{1}{|Q_0|} \chi_{Q_0}(\tfrac{x}{\varepsilon}) - 1\bigr)
\widehat{a}_{ij}\partial_{x_j} v_0 \, \partial_{x_i} v_0\,dx + p_{\varepsilon}
=: I_1 + I_1 + p_\varepsilon,
\end{multline}
where $\chi_{Q_0}$ is the characteristic function defined in (\ref{charach-function}).
The summand $I_1$ can be estimated by the same way as in the proof of Theorem~1.3 (\cite[Ch.~2]{OYS}).
As a result, we have
$ 
|I_1| \le c_1 \varepsilon \|v_0\|^2_{H^2(\Omega)}.
$

To estimate $I_2$ we note that $\int_\Box \bigl( \tfrac{1}{|Q_0|} \chi_{Q_0}(\xi) - 1\bigr) d\xi = 0.$
Therefore, with the help of Lem\-ma~1.1~(\cite{ChechPiatSham}) we get
$
|I_2| \le c_2 \varepsilon \|v_0\|^2_{H^2(\Omega)}.
$

Summarizing (\ref{est-6}) and estimates for $I_1$ and $I_2,$ from (\ref{expression-2}) we deduce
as follows
\begin{multline}\label{est-9}
\Bigl|\int_{\Omega_{\varepsilon}} a_{ij}^{\varepsilon}
\partial_{x_j} u_\varepsilon \, \partial_{x_i} u_\varepsilon \,dx -
\int_{\Omega} \widehat{a}_{ij}\partial_{x_j} v_0 \, \partial_{x_i} v_0\,dx\Bigr| \le
\\
\le C_1
  \Bigl(\varepsilon^{\frac{1}{2}} + \varepsilon \|f_0 \|^2_{L^2 (\Omega)} +
  \|f_\varepsilon-f_0 \|_{L^2 (\Omega_\varepsilon)}
            +\sum_{m=1}^2  \|g_\varepsilon^{(m)}-g_{0}^{(m)}\|_{L^2 (\Omega_\varepsilon)} \Bigr).
\end{multline}

Now consider the difference
\[
I_3:= \varepsilon \sum_{m=1}^2\int_{\Xi_{\varepsilon}^{(m)}} k_{m} (u_\varepsilon) u_\varepsilon \,ds_{x}
  -\sum_{m=1}^2 |S^{m}| \int_{\Omega} k_{m} (v_{0}) v_{0} \,dx.
\]
With the help of integral identities (\ref{identities}) we re-write it in the form
\begin{multline*}
I_3 =  \varepsilon \sum_{m=1}^2\int_{\Omega_{\varepsilon}}
  a_{ij}^{\varepsilon} \left. \partial_{\xi_{j}} \psi_{0}^{(m)} (\xi) \right |_{\xi=\frac{x}{\varepsilon}}
  \partial_{x_{i}} (k_{m}(u_{\varepsilon})u_{\varepsilon}) \, dx +
\\
+ \sum_{m=1}^2 q_{m} \int_{\Omega_{\varepsilon}}
  k_{m} (u_{\varepsilon}) u_{\varepsilon}\, dx - \sum_{m=1}^2 |S^{m}| \int_{\Omega} k_{m}(v_{0}) v_{0} \,dx.
\end{multline*}
Due (\ref{cond-2}), (\ref{est-0}) and (\ref{uniform estimate}) the first term is not grater then $\varepsilon c_1.$
Since
$$
\Bigl|\int_{\Omega_{\varepsilon}} k_{m} (u_{\varepsilon}) u_{\varepsilon}\, dx
- \int_{\Omega_{\varepsilon}} k_{m}(v_0) v_0\, dx\Bigr| \le c_2 \|u_{\varepsilon} -v_0\|_{L^2(\Omega_\varepsilon)},
$$
it remains to estimate the following difference
\[
\sum_{m=1}^2 \Bigl| q_{m} \int_{\Omega_{\varepsilon}}
  k_{m}(v_0) v_0 \, dx - |S^{m}| \int_{\Omega} k_{m}(v_{0}) v_{0} \,dx \Bigr|
= \sum_{m=1}^2 \Bigl|\int_{\Omega}\bigl(q_{m}\chi_{Q_0}(\tfrac{x}{\varepsilon}) -
|S^{m}|\bigr)  k_{m}(v_{0}) v_{0} \,dx \Bigr|.
\]
Thanks to the equality $\int_\Box \bigl( q_{m} \chi_{Q_0}(\xi) - |S^{m}|\bigr) d\xi = 0$ $(q_m=|S^{m}|/|Q_0|)$
and Lemma~1.1~(\cite{ChechPiatSham}), this difference is bounded by $c_3 \varepsilon \|v_0\|_{H^1(\Omega)}.$
Thus, $|I_3|\le c_4\varepsilon + c_2 \|u_{\varepsilon} -v_0\|_{L^2(\Omega_\varepsilon)}.$

Finally, taking into account the previous estimate, (\ref{est-9}), (\ref{final-estimate}) and noting  that $E_\varepsilon(u_\varepsilon) - E_0(v_0)= I_1+I_2+I_3 + p_\varepsilon,$ we arrive to
(\ref{energy-estimate}).
\end{proof}

\end{document}